\documentclass{amsart}

\usepackage{NCTori}

\title{The Gauss-Manin Connection and Noncommutative Tori}
\author{Allan Yashinski}
\thanks{This research was partially supported under NSF grant DMS-1101382.}

\begin{document}

\begin{abstract} We use Getzler's Gauss-Manin connection to prove the invariance of periodic cyclic cohomology for the smooth deformation of noncommutative tori.  We explicitly calculate the parallel translation maps and use them to describe the behavior of the Chern-Connes pairing under this deformation.
\end{abstract}

\maketitle

\tableofcontents

\section{Introduction}

Our motivating problem is to understand the behavior of cyclic cohomology under deformation of the algebra structure.  Given a family of algebras $\{A_t\}_{t \in J}$ depending on a real parameter $t$, we would like to identify conditions under which the periodic cyclic cohomology $HP^\bullet(A_t)$ is independent of the parameter $t$.

Our approach to this problem uses Getzler's Gauss-Manin connection \cite{MR1261901}.  We shall work in the setting of smooth deformations $\{A_t\}_{t \in J}$, in which the algebra structures depends smoothly in some sense on the parameter $t$.  The Gauss-Manin connection was initially introduced for formal deformations, but was adapted to smooth deformations in \cite{Yashinski-SmoothDefs}.  It is a connection $\nabla^{GM}$ on the bundle of chain complexes $\{C^{\per}(A_t)\}_{t \in J}$, which exists for any smooth deformation and commutes with the coboundary map.  If the appropriate differential equations can be solved, one can produce parallel translation isomorphisms
\[ P^{\nabla^{GM}}_{s,t}: HP^\bullet(A_s) \to HP^\bullet(A_t) \]
for any $s,t \in J$.

An important feature of cyclic cohomology is that it provides numerical invariants of $K$-theory classes through the Chern-Connes pairing
\[ \langle \cdot, \cdot \rangle: HP^i(A) \times K_i(A) \to \C,\qquad i=0,1, \]
between periodic cyclic cohomology and algebraic $K$-theory.  The Gauss-Manin connection is compatible with this pairing in the following sense.  Given a smoothly varying family $\{ \phi_t \in C^{\even}(A_t) \}_{t \in J}$ of even cocycles and a smoothly varying family of idempotents $\{P_t \in M_N(A_t)\}_{t \in J}$ in matrix algebras, we have
\[ \frac{d}{dt} \langle [\phi_t], [P_t] \rangle = \langle \nabla^{GM} [\phi_t], [P_t] \rangle, \]
see \cite{Yashinski-SmoothDefs}.  A similar result holds for the pairing of odd cocycles and invertibles, representing classes in $K_1(A_t)$.  Thus being able to compute with $\nabla^{GM}$ gives insight into how the numerical invariants arising from this pairing are changing with the parameter $t$.

In this paper, our focus is on noncommutative tori, a well-studied example in noncommutative geometry \cite{MR1047281}.  The smooth noncommutative $n$-torus $\A_\Theta$ is a Fr\'{e}chet algebra which can be naturally viewed as a deformation of $C^\infty(\T^n)$, the algebra of smooth complex-valued functions on the $n$-torus.  Our main result is the integrability of the Gauss-Manin connection for the deformation of noncommutative tori.  In particular, we obtain a parallel translation isomorphism $HP^\bullet(\A_\Theta) \cong HP^\bullet(C^\infty(\T^n))$.  Since the latter can be computed in terms of de Rham homology \cite{MR823176}, this provides a deformation theoretic computation of $HP^\bullet(\A_\Theta)$.  A description of $HP^\bullet(\A_\Theta)$ is not new, as it was already computed directly by Connes in the $n=2$ case \cite{MR823176} and by Nest in the general case \cite{MR973508}.  What is novel is our method of computing it using deformation theory.

We also explicitly describe the operator $\nabla^{GM}$ in terms of a natural basis for $HP^\bullet(\A_\Theta)$.  This allows us to compute the derivatives of the Chern-Connes pairing with respect to the parameter of the deformation.  It turns out that $\nabla^{GM}$ has certain nilpotence properties with respect to the elements of this basis.  This allows us to express the Chern-Connes pairings in the noncommutative torus as a polynomial function in the parameters whose coefficients are Chern-Connes pairings for the commutative torus $C^\infty(\T^n)$.  For example, given a smoothly varying (in $\Theta$) family of idempotents $\{P_\Theta \in M_N(\A_\Theta)\}$, the Chern-Connes pairings with $P_\Theta$ are determined by, and can be computed from, the characteristic classes of the smooth vector bundle associated to $P_0 \in M_N(C^\infty(\T^n))$.  Related formulas were found for the pairing with the canonical trace by Elliott \cite{MR731772}.

Similar work on the Gauss-Manin connection was carried out independently by Yamashita \cite{Yamashita}.

\subsection*{Acknowledgements}
I'd like to thank my thesis advisor, Nigel Higson, for his suggestion of this project and helpful guidance throughout.

\section{Preliminaries}

Here we shall rapidly review the necessary background material, including the types of algebras we consider, the basics of cyclic homology, and the Gauss-Manin connection in smooth deformation theory.

\subsection{Fr\'{e}chet algebras and modules}
The topological vector spaces we work with will generally have nice properties.  They will be either nuclear Fr\'{e}chet spaces or their duals.  Many of the definitions below make sense for general spaces in $\LCTVS$, the category of complete, Hausdorff, locally convex topological vector spaces and continuous linear maps.  We shall mostly just discuss the Fr\'{e}chet case below, because some of the results become simpler to state.  See \cite{Yashinski-SmoothDefs} for more details in the general case.

By a continuous bilinear (or multilinear) map, we always mean jointly continuous, though this distinction is not necessary to make for Fr\'{e}chet spaces.  A Fr\'{e}chet algebra is a Fr\'{e}chet space equipped with a continuous associative multiplication.  Similarly, a Fr\'{e}chet module over a Fr\'{e}chet algebra has a continuous module action.

Let $R$ be a commutative unital Fr\'{e}chet algebra, which will serve as the ground ring.  The relevant examples are $R = \C$ and $R = C^\infty(J)$, the Fr\'{e}chet algebra of smooth complex-valued functions on a real interval $J$.  Given two Fr\'{e}chet $R$-modules $M$ and $N$, their \emph{projective tensor product over $R$} is a Fr\'{e}chet $R$-module $M \potimes_R N$ equipped with a universal continuous $R$-bilinear map $\iota: M \times N \to M \potimes_R N$.  That is, given any Fr\'{e}chet $R$-module $P$ and a continuous $R$-bilinear map $B: M \times N \to P$, there is a unique continuous $R$-linear map $\widehat{B}: M \potimes_R N \to P$ such that the diagram
\[ \xymatrix{
M \times N \ar[r]^-\iota \ar[rd]^-B & M \potimes_R N \ar[d]^-{\widehat{B}}\\
& P
} \]
commutes.  For more information, see \cite{MR1093462}.

By a Fr\'{e}chet $R$-algebra, we mean a Fr\'{e}chet $R$-module $A$ equipped with a continuous $R$-bilinear associative multiplication.  Notice that the multiplication induces a continuous $R$-linear map
\[ m: A \potimes_R A \to A. \]

Given a Fr\'{e}chet $R$-module $M$, let $M^\dual = \Hom_R(M, R)$ be the strong $R$-linear topological dual space, equipped with the topology of uniform convergence on bounded subsets of $M$.  Then $M^\dual$ is a complete, Hausdorff, locally convex $R$-module, but generally will not be a Fr\'{e}chet space.  We will reserve the notation $M^* = \Hom(M,\C)$ for the usual topological dual, which is the case when $R = \C$.  The transpose of a continuous $R$-linear map $F: M \to N$ is a continuous $R$-linear map
\[ F^\dual: N^\dual \to M^\dual, \qquad F^\dual(\phi) = \phi \circ F. \]

\subsection{Hochschild and cyclic homology}

See \cite{MR1600246} for more details concerning Hochschild and cyclic homology.

Given a Fr\'{e}chet $R$-algebra, let $A_+ = A \oplus R$ denote its $R$-linear unitization, which has the product
\[ (a_1, r_1)(a_2, r_2) = (a_1a_2 + r_2a_1 + r_1a_2, r_1r_2). \]
We'll perform this construction even if $A$ is already unital.  To avoid confusion, we'll denote the unit $(0,1)$ of $A_+$ by $e$.

The (continuous, $R$-linear) Hochschild chain groups of $A$ are
\[ C_0(A) = A, \qquad C_n(A) = A_+ \potimes_R A^{\potimes_R n} \quad (n \geq 1). \]
We'll also use the notation $C_\bullet^R(A)$ if we wish to emphasize the ground ring $R$.  The boundary map $b: C_n(A) \to C_{n-1}(A)$ is given on elementary tensors by
\begin{align*}
b(\widetilde{a_0} \otimes a_1 \otimes \ldots \otimes a_n) &= \sum_{j=0}^{n-1} (-1)^j \widetilde{a_0} \otimes \ldots \otimes a_ja_{j+1} \otimes \ldots \otimes a_n\\
& \qquad \qquad + (-1)^na_n\widetilde{a_0} \otimes a_1 \otimes \ldots \otimes a_{n-1}
\end{align*}
for $\widetilde{a_0} \in A_+$ and $a_i \in A$, and extended by continuity.  Then $b^2 = 0$, and the homology of the chain complex $(C_\bullet(A), b)$ is the Hochschild homology, denoted $HH_\bullet(A)$ or $HH_\bullet^R(A)$, for emphasis on the ground ring $R$.

Connes' differential $B: C_n(A) \to C_{n+1}(A)$ is given by
\[ B(a_0 \otimes \ldots \otimes a_n) = \sum_{j=0}^n (-1)^{jn} e \otimes a_j \otimes a_{j+1} \otimes \ldots \otimes a_n \otimes a_0 \otimes a_1 \otimes \ldots \otimes a_{j-1}, \]
\[ B(e \otimes a_1 \otimes \ldots \otimes a_n) = 0. \]
It satisfies $B^2 = 0$ and $bB + Bb = 0$.
The even and odd periodic cyclic chain groups of $A$ are
\[ C_{\even}(A) = \prod_{n=0}^\infty C_{2n}(A), \qquad C_{\odd}(A) = \prod_{n=0}^\infty C_{2n+1}(A). \]  They form a $Z/2$-graded chain complex
\[ \xymatrix{
C_{\even}(A) \ar@<.5ex>[r]^-{b+B} &C_{\odd}(A) \ar@<.5ex>[l]^-{b+B}
} \]
whose even and odd homology groups are the even and odd periodic cyclic homology groups, denoted $HP_0(A)$ and $HP_1(A)$ respectively.

The Hochschild cohomology $HH^\bullet(A)$ is the cohomology of the dual complex $C^\bullet(A) := (C_\bullet(A))^\dual$ with boundary map $b^\dual$, though we'll denote the boundary map simply as $b$.  Similarly, the periodic cyclic cohomology $HP^\bullet(A)$ is the cohomology of the dual complex of $C_{\per}(A)$.

We'll need another variant of Hochschild cohomology.  Let
\[ C^k(A,A) = \Hom_R(A^{\potimes_R k}, A) \]
and define $\delta: C^k(A,A) \to C^{k+1}(A,A)$ by
\begin{align*}
\delta D(a_1, \ldots, a_{k+1}) &= D(a_1, \ldots , a_k)a_{k+1} + (-1)^{k+1} a_1D(a_2, \ldots, a_{k+1})\\ & \qquad + \sum_{j=1}^k (-1)^{k-j+1}D(a_1, \ldots , a_{j-1}, a_ja_{j+1}, a_{j+2}, \ldots , a_k).
\end{align*}  Then $\delta^2 = 0$, and the cohomology of $C^\bullet(A,A)$ is the Hochschild cohomology of $A$ (with coefficients in $A$), denoted $H^\bullet(A,A)$.  Notice that $D \in C^1(A,A)$ satisfies $\delta D = 0$ if and only if $D$ is a derivation.

The complex $C^\bullet(A,A)$ and its cohomology has much additional structure \cite{MR0161898}.  There is an associative cup product
\[ \smile: C^k(A,A) \otimes C^\ell (A,A) \to C^{k+\ell }(A,A) \]
given by
\[ (D \smile E)(a_1, \ldots , a_{k+l}) = (-1)^{k\ell}D(a_1, \ldots , a_k)E(a_{k+1}, \ldots , a_{k+\ell}), \] which satisfies
\[ \delta(D \smile E) = (\delta D)\smile E ~+~ (-1)^k D \smile (\delta E). \]
There is also a Lie bracket
\[ [\cdot, \cdot]: C^k(A,A) \otimes C^\ell(A,A) \to C^{k+\ell-1}(A,A) \]
called the Gerstenhaber bracket.  It is given by
\[ [D, E] = D \circ E - (-1)^{(k-1)(\ell-1)} E \circ D, \] where
\[ (D \circ E)(a_1, \ldots , a_{k+l-1}) = \sum_{j=0}^{k-1} (-1)^{j(\ell-1)} D(a_1, \ldots , E(a_{j+1}, \ldots , a_{j+\ell}), \ldots , a_{k+\ell-1}). \]  When $D,E \in C^1(A,A)$, this bracket is just the usual commutator of linear maps.  If $m: A \potimes_R A \to A$ denotes the multiplication map, then
\[ \delta D = [m, D]. \]  The bracket also satsifies
\[ \delta [D,E] = [\delta D, E] + (-1)^{k-1}[D, \delta E]. \]  If we shift the degree by letting $\g^\bullet(A) = C^{\bullet+1}(A,A)$, then $\g^\bullet(A)$ is a differential graded Lie algebra.  With all of the structure above, the cohomology $H^\bullet(A,A)$ is a Gerstenhaber algebra.

\subsection{Chern-Connes character}

For more details on the Chern-Connes character, see \cite[Ch. 8]{MR1600246}.  The Chern-Connes character is a group homomorphism
\[ \ch: K_i(A) \to HP_i^R(A),\qquad i=0,1, \]
where $K_i(A)$ is the algebraic $K$-theory group of $A$.  We will need a description of this map.  Given an idempotent $P \in A$, the cycle $\ch P \in C_{\even}(A)$ is given by
\[ (\ch P)_{2n} = (-1)^n\frac{(2n)!}{n!}(P^{\otimes (2n+1)} - \frac{1}{2} e \otimes P^{\otimes (2n)}). \]  For an invertible $U \in A$, the cycle $\ch U \in C_{\odd}(A)$ is given by
\[ (\ch U)_{2n+1} = (-1)^n n! U^{-1} \otimes U \otimes U^{-1} \otimes \ldots \otimes U^{-1} \otimes U.\]  More generally if $P$ is an idempotent in a matrix algebra $M_N(A)$, then $\ch P \in C_{\even}(A)$ is obtained as the image of $\ch P \in C_{\even}(M_N(A))$ under the generalized trace
\[ T: C_{\per}(M_N(A)) \to C_{\per}(A), \]
see \cite[Ch. 1]{MR1600246}, and similarly for an invertible $U \in M_N(A)$.  The map $T$ is a chain homotopy equivalence which induces isomorphisms $HH_\bullet(M_N(A)) \cong HH_\bullet(A)$ and $HP_\bullet(M_N(A)) \cong HP_\bullet(A)$.

From the natural pairing between cyclic homology and cyclic cohomology, we obtain the Chern-Connes pairing
\[ \langle \cdot, \cdot \rangle: HP_R^i(A) \times K_i(A) \to R \] given by
\[ \langle [\phi], [P] \rangle = \langle [\phi], [\ch P] \rangle, \qquad \langle [\psi], [U] \rangle = \langle [\psi], [\ch U] \rangle. \]

\subsection{Operations on the cyclic complex} \label{Section-Operations}

Elements of $C^\bullet(A,A)$ act on $C_\bullet(A)$, and consequently $C_{\per}(A)$, by Lie derivative and contraction operations \cite{MR1261901}, \cite{MR2308582}.  We quickly review these operations and their properties.  Our conventions vary from \cite{MR1261901} and are closer to \cite{MR2308582}.  Proofs of all identities in this section, with these conventions, are given in \cite{Yashinski-Thesis}.

Given $D \in C^k(A,A)$, the \emph{Lie derivative} is the operator $L_D: C_n(A) \to C_{n-k+1}(A)$ of degree $-(k-1)$ given by
\begin{align*}
&L_D(a_0 \otimes \ldots \otimes a_n)\\
&\qquad = \sum_{i=0}^{n-k+1}(-1)^{i(k-1)}a_0 \otimes \ldots \otimes D(a_i, \ldots , a_{i+k-1}) \otimes \ldots \otimes a_n\\
& \qquad \qquad + \sum_{i=1}^{k-1}(-1)^{in}D(a_{n-i+1}, \ldots , a_n, a_0, \ldots , a_{k-1-i}) \otimes a_{k-i} \otimes \ldots \otimes a_{n-i}.
\end{align*}
In this formula and others below, it is understood that the result is $0$ whenever $D$ has the adjoined unit $e$ as one of its arguments.

\begin{proposition} \label{Proposition-LieDerivativeProperties} For any $D, E \in C^\bullet(A,A)$,
\[ [L_D, L_E] = L_{[D,E]},\qquad [b, L_D] = L_{\delta D}, \qquad [B, L_D] = 0. \]
\end{proposition}

Commutators of operators on the cyclic complex will always mean graded commutators.  So if $D \in C^k(A,A)$ and $E \in C^\ell(A,A)$,
\[ [L_D, L_E] = L_DL_E - (-1)^{(k-1)(\ell - 1)}L_E L_D, \]
and \[ [b+B, L_D] = (b+B)L_D - (-1)^{k-1}L_D(b+B). \]
The result of the proposition is that the periodic cyclic complex is a differential graded module over the differential graded Lie algebra $\g^\bullet(A)$.  A simple, but relevant, example is that if $D \in C^1(A,A)$, then
\[ L_D(a_0 \otimes \ldots \otimes a_n) = \sum_{j=0}^n a_0 \otimes \ldots \otimes a_{j-1} \otimes D(a_j) \otimes a_{j+1} \otimes \ldots \otimes a_n. \]

Given $D \in C^k(A,A)$, define operators
\[ \iota_D: C_n(A) \to C_{n-k}(A),\qquad S_D: C_n(A) \to C_{n-k+2}(A) \] by
\[ \iota_D(a_0 \otimes \ldots \otimes a_n) = (-1)^{k-1}a_0D(a_1, \ldots , a_k) \otimes a_{k+1} \otimes \ldots \otimes a_n, \]
\begin{align*}
&S_D(a_0 \otimes \ldots \otimes a_n) = \sum_{i=1}^{n-k+1}\sum_{j=0}^{n-i-k+1} (-1)^{i(k-1) + j(n-k+1)}\\
& \qquad e \otimes a_{n-j+1} \otimes \ldots \otimes a_n \otimes a_0 \otimes \ldots \otimes a_{i-1} \otimes D(a_i, \ldots , a_{i+k-1}) \otimes a_{i+k} \otimes \ldots \otimes a_{n-j},
\end{align*}
\[ S_D(e \otimes a_1 \otimes \ldots \otimes a_n) = 0. \]
Define the \emph{contraction} on $C_{\per}(A)$ to be the operator
\[ I_D = \iota_D + S_D. \]
Now we can state Getzler's Cartan homotopy formula \cite{MR1261901}.

\begin{theorem}[Cartan homotopy formula] \label{Theorem-CHF} For any $D \in C^\bullet(A,A)$,
\[ [b+B, I_D] = L_D - I_{\delta D}. \]
\end{theorem}

In particular, if $D \in C^\bullet(A,A)$ is a Hochschild cocycle, then $L_D = 0$ on $HP_\bullet(A)$.

The Lie derivative and contraction operations of the previous section have multiple generalizations, see e.g. \cite{MR1261901} or \cite{MR2308582}.  We shall need just one of these.  For $X,Y \in C^1(A,A)$, define the operators $L\{X,Y\}$ and $I\{X,Y\}$ on $C_{\bullet}(A)$ by
\begin{align*}
L\{X,Y\}(a_0 \otimes \ldots \otimes a_n) &= \sum_{i=1}^{n-1} \sum_{j=i+1}^n a_0 \otimes \ldots \otimes X(a_i) \otimes \ldots \otimes Y(a_j) \otimes \ldots \otimes a_n)\\
& \qquad + \sum_{i=1}^n Y(a_0) \otimes a_1 \otimes \ldots \otimes X(a_i) \otimes \ldots \otimes a_n.
\end{align*}
and
\begin{align*} &I\{X,Y\}(a_0 \otimes \ldots \otimes a_n)\\ &\qquad = \sum_{i=1}^{n-1} \sum_{j=i+1}^n \sum_{m=0}^{n-j} (-1)^{nm}\\  &\qquad \qquad e \otimes a_{n-m+1} \otimes \ldots \otimes a_n \otimes a_0 \otimes \ldots \otimes X(a_i) \otimes \ldots \otimes Y(a_j) \otimes \ldots \otimes a_{n-m}
\end{align*}
if $a_0 \in A$ and
\[ I\{X,Y\}(e \otimes a_1 \otimes \ldots \otimes a_n) = 0. \]
The following formulas appear in \cite{MR1261901}, with slightly different conventions.

\begin{theorem} \label{Theorem-CHF-TwoDerivations}
If $X$ and $Y$ are derivations, then
\begin{enumerate}[(i)]
\item $[b+B, I\{X,Y\}] = L\{X,Y\} + I_{X \smile Y} - I_Y I_X$.
\item $[b+B, L\{X,Y\} ] = -L_{X \smile Y} + L_Y I_X - I_Y L_X$.
\end{enumerate}
\end{theorem}
Notice that the second formula follows from the first by applying the commutator with $b+B$ and using Theorem~\ref{Theorem-CHF}.

\subsection{Smooth deformations}

We work in the setting of \cite{Yashinski-SmoothDefs}.  Let $J \subseteq \R$ be an interval.  By a \emph{smooth deformation of Fr\'{e}chet algebras}, we mean a Fr\'{e}chet space $X$ equipped with a collection $\{ m_t: X \times X \to X\}_{t \in J}$ of continuous associative multiplications which depend smoothly on $t$ in the sense that for every $x_1,x_2  \in X$, the map $t \mapsto m_t(x_1,x_2)$ is smooth, i.e. infinitely differentiable.  Given such a smooth deformation, let $A_t$ denote the Fr\'{e}chet algebra $(X, m_t)$ for each $t \in J$.  We will often refer to $\{A_t\}_{t \in J}$ as the smooth deformation.  Let $A_J = C^\infty(J,X)$ denote the space of all smooth functions from the interval $J$ into $X$.  This space is a Fr\'{e}chet space under the usual topology of uniform convergence of functions and all their derivatives on compact subsets of $J$.  If we view the vector spaces $\{A_t\}_{t \in J}$ as forming a vector bundle over $J$, then $A_J$ is the space of smooth sections of this bundle.  Consequently, $A_J$ is a topological $C^\infty(J)$-module, where the action is given by pointwise scalar multiplication.  We equip $A_J$ with the multiplication
\[ (a_1 a_2)(t) = m_t(a_1(t), a_2(t)),\qquad a_1, a_2 \in A_J. \]
As shown in \cite{Yashinski-SmoothDefs}, $A_J$ is a Fr\'{e}chet $C^\infty(J)$-algebra.  Conversely, any continuous $C^\infty(J)$-linear multiplication on $C^\infty(J,X)$ arises from a smooth deformation.

\subsection{Connections and parallel translation}
By a \emph{connection} on a locally convex $C^\infty(J)$-module $M$, we mean a continuous $\C$-linear map $\nabla: M \to M$ such that
\[ \nabla(f\cdot m) = f' \cdot m + f\cdot \nabla(m), \qquad f \in C^\infty(J), ~m \in M \]
Notice that $\frac{d}{dt}$ is a connection on any module of the form $C^\infty(J,X)$ for some locally convex vector space $X$.  Any other connection on $C^\infty(J,X)$ is of the form $\nabla = \frac{d}{dt} - F$ for some continuous $C^\infty(J)$-linear endomorphism $F$ of $C^\infty(J,X)$.

Given two $C^\infty(J)$-modules $M$ and $N$ with connections $\nabla_M$ and $\nabla_N$, a $C^\infty(J)$-linear map $F: M \to N$ is called \emph{parallel} if $F \circ \nabla_M = \nabla_N \circ F$.  A connection $\nabla_M$ on $M$ is called \emph{integrable} if there is a parallel isomorphism
\[ F: (M, \nabla_M) \to \left(C^\infty(J,X), \frac{d}{dt}\right) \]
of locally convex $C^\infty(J)$-modules, for some $X \in \LCTVS$.

Integrability of a connection can be characterized in terms of parallel translation.  Let $M = C^\infty(J,X)$ and let $\nabla$ be a connection on $M$.  For each $t \in J$, let $M_t = X$, which we think of as the fiber of $t \in J$.  Suppose that for every $s \in J$ and $x \in M_s$, there exists a unique $m \in M$ such that
\[ \nabla m = 0,\qquad m(s) = x. \]
Then we can define a parallel translation map
\[ P^\nabla_{s,t}: M_s \to M_t, \qquad P^\nabla_{s,t}(x) = m(t), \]
where $m \in M$ is the unique solution as above.  Then $P^\nabla_{s,t}$ is a linear isomorphism with inverse $P^\nabla_{t,s}$.

\begin{theorem} [\cite{Yashinski-SmoothDefs}] \label{Theorem-IntegrabilityCriteria}
If $X$ is a Fr\'{e}chet space, then a connection $\nabla$ on $M = C^\infty(J,X)$ is integrable if and only if the following two conditions hold:
\begin{enumerate}[(i)]
\item For every $s \in J$ and $x \in M_s$, there is a unique $m \in M$ such that
\[ \nabla m = 0, \qquad m(s) = x. \]
\item Each $P^\nabla_{s,t}: M_s \to M_t$ is continuous, and for each fixed $x \in X$, the map $(s,t) \mapsto P^\nabla_{s,t}(x)$ is smooth (i.e. all mixed partial derivatives exist).
\end{enumerate}
\end{theorem}

In this case, each $P_{s,t}^\nabla$ is continuous, and therefore is an isomorphism of topological vector spaces.

Given a connection $\nabla$ on $M$, the dual connection $\nabla^\dual$ on $M^\dual = \Hom_{C^\infty(J)}(M, C^\infty(J))$ is
\[ (\nabla^\dual \phi)(m) = \frac{d}{dt}(\phi(m)) - \phi(\nabla m), \qquad \phi \in M^\dual, m \in M. \]  By design, the canonical pairing
\[ \langle \cdot , \cdot \rangle: M^\dual \otimes M \to C^\infty(J) \] is parallel, meaning
\[ \frac{d}{dt} \langle \phi, m \rangle = \langle \nabla^\dual \phi, m \rangle + \langle \phi, \nabla m \rangle. \]  As shown in \cite{Yashinski-SmoothDefs}, if $M = C^\infty(J,X)$ for a nuclear Fr\'{e}chet space $X$, then $M^\dual \cong C^\infty(J,X^*)$.  Moreover if $\nabla$ is an integrable connection on $M$, then the dual connection $\nabla^\dual$ is integrable on $M^\dual$, and
\[ P^{\nabla^\dual}_{s,t} = (P^\nabla_{t,s})^*: M_s^* \to M_t^*. \]

\subsection{The Gauss-Manin connection} \label{Section-GaussManinConnection}

The Gauss-Manin connection was initially introduced by Getzler in the setting of formal deformations \cite{MR1261901}.  We shall use it in the smooth setting, as in \cite{Yashinski-SmoothDefs}.

Let $\{A_t\}_{t \in J}$ be a smooth deformation of Fr\'{e}chet algebras and let $A_J$ denote the algebra of sections.  Consider the complex $C_{\per}^{C^\infty(J)}(A_J)$, which is naturally isomorphic as a $C^\infty(J)$-module to $C^\infty(J, C_{\per}(X))$, where $X$ is the underlying Fr\'{e}chet space of the algebras in the deformation.  So the complex $C_{\per}^{C^\infty(J)}(A_J)$ can be viewed as the space of sections of a bundle of chain complexes, and the Gauss-Manin connection a certain connection on this bundle.  To construct it, choose any connection $\nabla$ on $A_J$ and let $E = \delta \nabla \in C^2(A_J,A_J)$, so that
\[ \nabla(a_1a_2) = \nabla(a_1)a_2 + a_1\nabla(a_2) - E(a_1, a_2),\qquad a_1, a_2 \in A_J. \]  The Gauss-Manin connection is defined as
\[ \nabla_{GM} = L_\nabla - I_E: C_{\per}^{C^\infty(J)}(A_J) \to C_{\per}^{C^\infty(J)}(A_J). \]
Since $\nabla$ is a connection, it follows that $E$ is $C^\infty(J)$-bilinear and the operator $I_E$ is well-defined on $C_{\per}^{C^\infty(J)}(A_J)$ as in section \ref{Section-Operations}.  The connection $\nabla$ is not $C^\infty(J)$-linear, but the formula for $L_\nabla$ still gives a well-defined operator on the quotient complex $C_{\per}^{C^\infty(J)}(A_J)$ thanks to the Leibniz rule.  In fact, $L_\nabla$ itself is a connection.

The connection $\nabla_{GM}$ commutes with the boundary map $b+B$:
\[ [b+B, \nabla_{GM}] = [b+B, L_\nabla] - [b+B, I_E] = L_{\delta \nabla} - (L_E - I_{\delta E}) = 0 \]
because $\delta E = \delta^2 \nabla = 0$.
Consequently, $\nabla_{GM}$ induces a connection on $HP_\bullet^{C^\infty(J)}(A_J)$.  As shown in \cite{Yashinski-SmoothDefs}, the induced connection is independent of the initial choice of connection $\nabla$ on $A_J$.

If $\nabla_{GM}$ is integrable as a connection on $C_{\per}^{C^\infty(J)}(A_J)$, then its parallel translation maps
\[ P_{s,t}^{\nabla_{GM}}: C_{\per}(A_s) \to C_{\per}(A_t) \]
are automatically chain maps, hence isomorphisms of chain complexes.  Integrability of $\nabla_{GM}$ at the level of chain complexes seems unlikely in practice.  Alternatively, we can ask if $\nabla_{GM}$ is integrable as a connection on the homology module $HP_\bullet^{C^\infty(J)}(A_J)$.  This module may be quite pathological as both a topological vector space and a $C^\infty(J)$-module.  We can still try to solve the parallel translation equation
\[ \nabla_{GM}[c] = [0], \qquad [c(s)] = [c_s] \]
for $[c] \in HP_\bullet^{C^\infty(J)}(A_J)$, given $s \in J$ and initial data $[c_s] \in HP_\bullet(A_s)$.  If solutions exist and are unique, then there are parallel translation isomorphisms
\[ P^{\nabla_{GM}}_{s,t}: HP_\bullet(A_s) \to HP_\bullet(A_t). \]

\subsection{Deformation of Chern-Connes pairing}

We define the dual Gauss-Manin connection to be the dual connection $\nabla^{GM} = (\nabla_{GM})^\dual$ on $C^{\per}_{C^\infty(J)}(A_J)$, as defined earlier.  It follows that
\[ \frac{d}{dt} \langle [\phi], [\omega] \rangle = \langle \nabla^{GM} [\phi], [\omega] \rangle + \langle [\phi], \nabla_{GM} [\omega] \rangle, \qquad [\phi] \in HP^\bullet(A_J), [\omega] \in HP_\bullet(A_J). \]
It was shown in \cite{Yashinski-SmoothDefs} that any $[\omega] \in HP_\bullet^{C^\infty(J)}(A_J)$ in the image of the Chern-Connes character satisfies $\nabla_{GM}[\omega] = 0$ in $HP_\bullet^{C^\infty(J)}(A_J)$.  For the Chern-Connes pairing, we obtain
\[ \frac{d}{dt} \langle [\phi], [P] \rangle = \langle \nabla^{GM}[\phi], [P] \rangle, \qquad \frac{d}{dt} \langle [\psi], [U] \rangle = \langle \nabla^{GM}[\psi], [U] \rangle. \]

\section{Smooth noncommutative tori}
Given an $n \times n$ skew-symmetric real-valued matrix $\Theta$, the \emph{noncommutative torus} $A_\Theta$ is the universal $C^*$-algebra generated by $n$ unitaries $u_1, \ldots , u_n$ such that
\[ u_j u_k = e^{2\pi i\theta_{jk}}u_k u_j, \] where $\Theta = (\theta_{jk})$, see \cite{MR1047281}.  In the case $\Theta = 0$, all of the generating unitaries commute, and we have $A_0 \cong C(\T^n)$, the algebra of continuous complex-valued functions on the $n$-torus.  It is for this reason why the algebra $A_\Theta$ has earned its name, as it can be philosophically viewed as functions on some ``noncommutative torus" in the spirit of Alain Connes' noncommutative geometry \cite{MR1303779}.

It is helpful to think of an element in $x \in A_\Theta$ formally as a ``Fourier series"
\[ x = \sum_{\alpha \in \Z^n} x_\alpha u^\alpha, \]
where $\alpha = (\alpha_1, \ldots \alpha_n)$ is a multi-index, $u^\alpha = u_1^{\alpha_1}u_2^{\alpha_2}\ldots u_n^{\alpha_n}$, and $x_\alpha \in \C$.  Indeed, the isomorphism $C(\T^n) \cong A_0$ is essentially implemented by mapping a function to its Fourier series.  There are many issues with the convergence of Fourier series of continuous functions, so the above series will typically not converge in norm.  This will not be an issue for us because we will work exclusively with smooth functions, and their noncommutative analogues, which have well-behaved Fourier series.

As a topological vector space, the \emph{smooth noncommutative torus} $\A_\Theta$ is the Schwarz space $\S(\Z^n)$ of complex-valued sequences indexed by $\Z^n$ of rapid decay, defined as follows.  Given a multi-index $\alpha = (\alpha_1, \ldots , \alpha_n) \in \Z^n$, we shall write
\[ |\alpha| = |\alpha_1| + \ldots + |\alpha_n|. \]
A sequence $x = (x_\alpha)_{\alpha \in \Z^n}$ is of \emph{rapid decay} if for every positive integer $k$,
\[ p_k(x) := \sum_{\alpha \in \Z^n}(1 + |\alpha|)^k|x_\alpha| < \infty. \]
The functions $p_k$ are seminorms, and we give $\A_\Theta$ the locally convex topology defined by these seminorms.  Under this topology, $\A_\Theta$ is complete, and therefore is a Fr\'{e}chet space.
We identify $\A_\Theta$ as a subspace of $A_\Theta$ by the map $\iota: \A_\Theta \to A_\Theta$,
\[ \iota(x) = \sum_{\alpha \in \Z^n}x_\alpha u^\alpha. \]
Then $\A_\Theta$ is norm dense in $A_\Theta$ because it contains the $*$-algebra generated by $u_1, \ldots , u_n$.  The multiplication in $\A_\Theta$ is given by the twisted convolution product
\[ (xy)_\alpha = \sum_{\beta \in \Z^n}e^{2\pi iB_\Theta(\alpha - \beta, \beta)}x_{\alpha - \beta}y_\beta, \] where
\[ B_\Theta(\alpha, \beta) = \sum_{j > k} \alpha_j \beta_k \theta_{jk} \] is the associated $\R$-valued group $2$-cocycle on $\Z^n$.
One can show that for any $k$, \[ p_k(xy) \leq p_k(x)p_k(y), \qquad x,y \in \A_\Theta. \]  This shows that $\A_\Theta$ is a subalgebra, and in fact is an $m$-convex Fr\'{e}chet algebra.

The algebra $\A_\Theta$ possesses $n$ canonical continuous derivations 
\[\delta_1,\ldots , \delta_n: \A_\Theta \to \A_\Theta\] defined by
\[ (\delta_j(x))_\alpha = 2\pi i\alpha_j \cdot x_\alpha. \]
In the case $\theta = 0$, the derivations $\delta_1, \ldots , \delta_n$ correspond to the usual $n$ partial differentiation operators on $\T^n$.
There is also a canonical continuous trace $\tau: \A_\Theta \to \C$ given by $\tau(x) = x_0$, which corresponds to integration with respect to the normalized Haar measure in the case $\Theta = 0$.  Notice that $\tau \circ \delta_j = 0$ for all $j$.

The smooth noncommutative torus $\A_\Theta$ can be viewed as a smooth one-parameter deformation of $C^\infty(\T^n) \cong \A_0$ in the following way.  For each $t \in J = \R$, let $A_t = \A_{t\Theta}$.  The product in $A_t$ is given by
\[ m_t(x,y)_\alpha = \sum_{\beta \in \Z^n}e^{2\pi iB_\Theta(\alpha - \beta, \beta)t}x_{\alpha - \beta}y_\beta. \]

\begin{proposition} \label{Proposition-NCToriIsSmoothDeformation}
Given an $n \times n$ skew-symmetric real matrix $\Theta$, the deformation $\{\mathcal{A}_{t\Theta}\}_{t \in \R}$ is smooth, and for $x,y$ in the underlying space $\S(\Z^n)$,
\[ \frac{d}{dt} m_t(x,y) = \frac{1}{2\pi i} \sum_{j > k} \theta_{jk}\cdot m_t(\delta_j(x), \delta_k(y)). \]
\end{proposition}

\begin{proof}
We first derive an estimate that we will need.
Taylor's formula for a function $f$ of a real variable is
\[ f(t) = f(0) + f'(0)t + \int_0^t f''(u)(t-u)du. \]
Applying this to $f(t) = e^{2\pi iat}$ for a fixed $a\in \R$, we obtain the estimate
\[ \left| \frac{e^{2\pi iat} - 1}{t} - 2\pi ia \right| \leq 2\pi^2a^2t.\]  The importance is that the bound is polynomial in $a$.  This would fail if $a \notin \R$.

To prove the deformation is smooth, we must prove $t \mapsto m_t(x,y)$ is smooth for all $x,y \in \S(\Z^n)$.  For fixed $x,y$, and $t$, let
\[ z_h = \frac{m_{t+h}(x,y) - m_t(x,y)}{h} - \frac{1}{2\pi i} \sum_{j > k} \theta_{jk} \cdot m_t(\delta_j(x), \delta_k(y)),\] and we shall show $z_h \to 0$ in $\S(\Z^n)$.  Then
\[ (z_h)_\alpha = \sum_{\beta \in \Z^n} e^{2\pi i B_\Theta(\alpha - \beta, \beta)t}\left( \frac{e^{2\pi i B_\Theta(\alpha - \beta, \beta)h} - 1}{h} - 2\pi i B_\Theta(\alpha - \beta, \beta) \right)x_{\alpha - \beta}y_\beta. \]
Using the above estimate, for a nonnegative integer $r$ we obtain
\begin{align*}
p_r(z_h) &= \sum_{\alpha \in \Z^n}(1 + |\alpha|)^r|(z_h)_\alpha|\\
&\leq \sum_{\alpha \in \Z^n} \sum_{\beta \in \Z^n}(1 + |\alpha|)^r 2\pi^2 h B_\Theta(\alpha-\beta, \beta)^2|x_{\alpha-\beta}||y_\beta|.
\end{align*}
Consider the map $D_\Theta: \S(\Z^n) \otimes \S(\Z^n) \to \S(\Z^n) \otimes \S(\Z^n)$ given by
\[ x \otimes y \mapsto \frac{1}{2\pi i} \sum_{j>k} \theta_{jk} \cdot \delta_j(x) \otimes \delta_k(y). \]  Notice that
\[ m_t(D_\Theta^2(x\otimes y))_\alpha = (2\pi i)^2 \sum_{\beta \in \Z^n} e^{2\pi i B_\Theta(\alpha - \beta, \beta)} B_\Theta(\alpha - \beta, \beta)^2 x_{\alpha - \beta}y_\beta. \]  So the last inequality can be rewritten as
\[ p_r(z_h) \leq \frac{h}{2} p_r(m_t(D_\Theta^2(x \otimes y))). \]  Thus, $p_r(z_h) \to 0$ as $h \to 0$ because $p_r(m_t(D_\Theta^2(x \otimes y)))$ is a finite quantity, independent of $h$.  This proves the formula
\[ \frac{d}{dt} m_t(x,y) = \frac{1}{2\pi i} \sum_{j > k} \theta_{jk} \cdot m_t(\delta_j(x), \delta_k(y)). \]  By induction, it follows from this formula that $t \mapsto m_t(x,y)$ is infinitely differentiable.
\end{proof}

Let $A_J$ be the algebra of sections of the smooth deformation $\{\A_{t\Theta}\}_{t \in J}$.  Notice that the derivations $\delta_1, \ldots , \delta_n$ on the fibers $\A_{t\Theta}$ induce $C^\infty(J)$-linear derivations on $A_J$, which we also denote $\delta_1, \ldots , \delta_n$.  Let $\nabla = \frac{d}{dt}$ be the canonical connection on $A_J$.  We conclude that the cocycle $E = \delta \nabla$, as in section \ref{Section-GaussManinConnection}, is given by
\[ E = \frac{1}{2\pi i} \sum_{j > k} \theta_{jk} \cdot \delta_j \smile \delta_k. \]
Indeed, both sides are $C^\infty(J)$-bilinear and continuous, and they agree on pairs of constant elements $(x,y)$ by Proposition~\ref{Proposition-NCToriIsSmoothDeformation}.  Thus they are equal because the $C^\infty(J)$-linear span of the constants is dense in $A_J \potimes_{C^\infty(J)} A_J$.  So we have the identity
\[ \nabla(a_1a_2) = \nabla(a_1)a_2 + a_1\nabla(a_2) + \frac{1}{2\pi i}\sum_{j>k}\theta_{jk}\delta_j(a_1)\delta_k(a_2), \qquad a_1, a_2 \in A_J. \]
One can show $[E] \neq 0$ in $H^2_{C^\infty(J)}(A_J,A_J)$, from which it follows that this deformation is nontrivial.  In fact, $[E_t] \neq 0 \in H^2(\A_{t\Theta}, \A_{t\Theta})$, which implies the deformation is locally nontrivial at each fiber (though this is well-known).

\section{The $\g$-invariant cyclic complex} \label{Section-InvariantComplex} \label{Section-gInvariantComplex}

In this section, we consider a Fr\'{e}chet $R$-algebra $A$ with an action of an abelian Lie algebra $\g$ by derivations.  Our main example is the noncommutative $n$-torus $\A_\Theta$ and $\g = \Span\{\delta_1, \ldots , \delta_n\}$.  We can also consider the algebra of sections $A_J$ of a noncommutative torus deformation $\{\A_{t\Theta}\}_{t \in J}$, where $\g$ is the $\C$-linear span of $\{\delta_1, \ldots , \delta_n\}$, viewed as operators on $A_J$.  The results of this section do not depend on any additional structure of these examples.

\subsection{$\g$-invariant chains and cochains}
Suppose that $\g \subset C^1_R(A,A)$ is a complex abelian Lie subalgebra of $R$-linear derivations on a Fr\'{e}chet $R$-algebra $A$.  Then $\g$ also acts on $C_{\bullet}^R(A)$ by Lie derivatives.
Define the \emph{$\g$-invariant Hochschild chain group} $C_\bullet^\g(A)$ to be the space of coinvariants of this action, that is
\[C_\bullet^\g(A) = C_\bullet^R(A)/\g \cdot C_\bullet^R(A). \]  For simplicity, we suppress the ground ring $R$ from the notation.  We shall make the assumption that $\g \cdot C_\bullet^R(A)$ is closed submodule, as this holds in the examples which are of interest to us.  Thus, $C_\bullet^\g(A)$ is Hausdorff.
That $X \in \g$ is a derivation on $A$ implies that $\delta X = 0$.  So by Proposition~\ref{Proposition-LieDerivativeProperties}, the operators $b$ and $B$ descend to operators on $C_\bullet^\g(A)$.  One can define the $\g$-invariant periodic cyclic complex $C_{\per}^\g(A)$ as the product of $\g$-invariant Hochschild chain groups, and its homology is the \emph{$\g$-invariant periodic cyclic homology} $HP_\bullet^\g(A)$.

Let $C^\bullet_\g(A,A)$ denote the space of all Hochschild cochains $D$ for which $[X,D] = 0$ for all $X \in \g$.  Since $\g$ is abelian, we have $\g \subseteq C^1_\g(A,A)$.  If $D \in C^\bullet_\g(A,A)$, then the formula
\[ 0 = \delta[X,D] = [\delta X, D] + [X, \delta D] \] shows that $C^\bullet_\g(A,A)$ is a subcomplex because $\delta X = 0$.

\begin{proposition} \label{Proposition-OperationsRestrictToInvariantComplex}
For any $D \in C^\bullet_\g(A)$, the operators $L_D$ and $I_D$ are well-defined on the $\g$-invariant complex $C_\bullet^\g(A)$.
\end{proposition}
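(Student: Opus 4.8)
The plan is to show that both $L_D$ and $I_D$ preserve the closed submodule $\overline{\g \cdot C_\bullet(A)}$, for then they descend to continuous operators on the quotient $C_\bullet^\g(A)$. Since $L_D$ and $I_D$ are continuous and linear, and this submodule is by definition the closure of the linear span of the elements $L_X\omega$ with $X \in \g$ and $\omega \in C_\bullet(A)$, it suffices to check that each operator sends every such generator $L_X\omega$ back into $\g \cdot C_\bullet(A)$; continuity then propagates the conclusion to the closure. Thus the whole statement reduces to two commutation identities between the operations and the Lie derivatives $L_X$, $X \in \g$.

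For the Lie derivative this is immediate from the bracket relation $[L_D, L_E] = L_{[D,E]}$ recorded just after Proposition~\ref{Proposition-LieDerivativeCommutator}. Taking $E = X \in \g$ and noting that $X$, being a derivation, has degree $0$ in $\g^\bullet(A)$ (so that the graded commutator with $L_X$ is an ordinary commutator), we obtain $[L_D, L_X] = L_{[D,X]}$. Since $D \in C^\bullet_\g(A,A)$ means precisely that $[X,D] = 0$ for every $X \in \g$, this commutator vanishes, and hence $L_D(L_X\omega) = L_X(L_D\omega) \in \g \cdot C_\bullet(A)$, exactly as needed.

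For the cyclic contraction the corresponding input is the identity $[L_X, I_D] = I_{[X,D]}$, and establishing this is the crux of the argument. Granting it, the $\g$-invariance $[X,D]=0$ again forces $L_X I_D = I_D L_X$, so that $I_D(L_X\omega) = L_X(I_D\omega) \in \g \cdot C_\bullet(A)$ and the proof concludes. This identity is the infinitesimal form of the equivariance of the cyclic contraction under algebra automorphisms: if $\alpha$ is an automorphism of $A$ with induced chain map $\alpha_*$ on $C_\bullet(A)$ and induced action $D \mapsto \alpha_* D$ on cochains, then the universal formula defining $I_D$ gives $\alpha_* I_D \alpha_*^{-1} = I_{\alpha_* D}$; differentiating along the flow $\exp(sX)$ generated by the derivation $X$, and using $\frac{d}{ds}\big|_{s=0}\alpha_* = L_X$ together with $\frac{d}{ds}\big|_{s=0}\alpha_* D = [X,D]$, yields the claim.

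Because $\exp(sX)$ need not converge in our analytic setting, I would instead treat $[L_X, I_D] = I_{[X,D]}$ as the purely algebraic identity of chain-level operators that it is, verifying it on elementary tensors for $I_D = \iota_D + S_D$ and then extending to the completed complex by continuity. The computation for $\iota_D$ is short: applying $L_X\iota_D - \iota_D L_X$ to $(a_0, \ldots, a_n)$, all terms cancel except the ones where $X$ and $D$ act on the same slot, and the Leibniz rule for $X$ collapses these into $(a_0\,[X,D](a_1)\cdots, \ldots) = \iota_{[X,D]}(a_0,\ldots,a_n)$. The analogous but more involved bookkeeping for the cyclic correction $S_D$ produces $S_{[X,D]}$, and summing gives the desired relation. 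The main obstacle is precisely this last verification for $S_D$: its many-term defining sum over cyclic permutations makes the sign and index tracking the only genuinely laborious part of the proof, whereas everything else follows formally from the two commutator identities and continuity.
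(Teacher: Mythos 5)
Your proof is correct and follows essentially the same route as the paper: the paper's argument consists precisely of the two commutator identities $[L_X, L_D] = L_{[X,D]}$ and $[L_X, I_D] = I_{[X,D]}$ for a derivation $X$, from which preservation of the submodule $\overline{\g \cdot C_\bullet(A)}$ follows as you describe. Your additional remarks on how to verify the contraction identity (on elementary tensors, splitting $I_D = \iota_D + S_D$) simply fill in what the paper leaves as ``one can verify directly.''
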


\begin{proof}
If $X \in C^1(A,A)$ is a derivation and $D \in C^\bullet(A,A)$, then one can verify directly that
\[ [L_X, I_D] = I_{[X,D]}, \qquad [L_X, L_D] = L_{[X,D]}, \]
from which the proposition follows.
\end{proof}

\begin{proposition}
If $X,Y \in C^1_{\g}(A,A)$, then $L\{X,Y\}$ and $I\{X,Y\}$ are well-defined operators on $C_\bullet^{\g}(A)$.
\end{proposition}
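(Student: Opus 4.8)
The plan is to reduce the statement to a commutation relation with the Lie derivatives that define the invariant complex, exactly as in the proof of Proposition~\ref{Proposition-OperationsRestrictToInvariantComplex}. Recall that $C_\bullet^\g(A) = C_\bullet(A)/\overline{\g \cdot C_\bullet(A)}$, so saying that an operator is \emph{well-defined} on $C_\bullet^\g(A)$ means precisely that it carries the closed submodule $\overline{\g \cdot C_\bullet(A)}$ into itself and hence descends to the quotient. Since $L\{X,Y\}$ and $I\{X,Y\}$ are continuous (as are all operators of \textsection\ref{Section-Operations}), it suffices to check that they preserve the \emph{algebraic} submodule $\g \cdot C_\bullet(A)$ spanned by the elements $L_Z \omega$ with $Z \in \g$ and $\omega \in C_\bullet(A)$; the closure is then preserved automatically by continuity. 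This in turn reduces to showing that $L_Z$ commutes with both $L\{X,Y\}$ and $I\{X,Y\}$ for every $Z \in \g$.

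The key step is therefore to establish the Leibniz-type commutator identities, valid for any derivation $Z$ and any $X, Y \in C^1(A,A)$,
\[ [L_Z, L\{X,Y\}] = L\{[Z,X],Y\} + L\{X,[Z,Y]\}, \qquad [L_Z, I\{X,Y\}] = I\{[Z,X],Y\} + I\{X,[Z,Y]\}. \]
These are the two-variable analogues of the identities $[L_Z, L_D] = L_{[Z,D]}$ and $[L_Z, I_D] = I_{[Z,D]}$ used in Proposition~\ref{Proposition-OperationsRestrictToInvariantComplex}, and they express that $L_Z$ acts as a derivation on the bilinear assignments $\{X,Y\} \mapsto L\{X,Y\}$ and $\{X,Y\} \mapsto I\{X,Y\}$. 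Granting them, the hypothesis $X, Y \in C^1_\g(A,A)$ means $[Z,X] = 0$ and $[Z,Y] = 0$ for all $Z \in \g$, so both commutators vanish:
\[ [L_Z, L\{X,Y\}] = 0, \qquad [L_Z, I\{X,Y\}] = 0. \]
Consequently $L\{X,Y\}(L_Z \omega) = L_Z\bigl(L\{X,Y\}\omega\bigr)$ and likewise for $I\{X,Y\}$, so each operator sends a generator $L_Z\omega$ back into $\g \cdot C_\bullet(A)$. By $C^\infty(J)$-linearity they preserve the whole algebraic submodule, and by continuity they preserve its closure, which yields the desired descent to $C_\bullet^\g(A)$.

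The main obstacle is the verification of the two Leibniz identities, which I would carry out by a direct computation on an elementary tensor $(a_0, \ldots, a_n)$. Writing $L_Z$ as the sum of its insertions $Z$ into each slot and commuting it past the insertions of $X$ and $Y$, the terms in which $Z$ lands in a slot distinct from those of $X$ and $Y$ reassemble into $L\{X,Y\}\, L_Z$ (respectively $I\{X,Y\}\, L_Z$), while the terms in which $Z$ collides with the $X$- or $Y$-slot produce precisely $L\{[Z,X],Y\} + L\{X,[Z,Y]\}$ (respectively with $I$), using that $Z$ is a derivation. This bookkeeping is routine for $L\{X,Y\}$, but for $I\{X,Y\}$ it is more delicate: one must track the nested sum over cyclic permutations together with the signs $(-1)^{nm}$, and check that commuting $L_Z$ across the adjoined unit and the cyclic repositioning does not create spurious boundary terms. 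This combinatorial check, though lengthy, is entirely analogous to the degree-one case already handled in Proposition~\ref{Proposition-OperationsRestrictToInvariantComplex}, and presents no conceptual difficulty once the sign conventions are fixed.
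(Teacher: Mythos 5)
Your proposal is correct and follows the paper's own argument exactly: the paper's proof likewise reduces the statement to the commutator identities $[L_Z, L\{X,Y\}] = L\{[Z,X],Y\} + L\{X,[Z,Y]\}$ and $[L_Z, I\{X,Y\}] = I\{[Z,X],Y\} + I\{X,[Z,Y]\}$, verified by direct computation, from which the vanishing of the commutators for $X,Y \in C^1_\g(A,A)$ gives the descent to the quotient. You have simply spelled out the reduction to preserving $\overline{\g \cdot C_\bullet(A)}$ in more detail than the paper does.
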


\begin{proof}
For any $X,Y,Z \in C^1(A,A)$, one can verify directly the identities
\[ [L_Z, I\{X,Y\}] = I\{[Z,X], Y\} + I\{X, [Z,Y]\} \] and
\[ [L_Z, L\{X,Y\}] = L\{[Z,X], Y\} + L\{X, [Z,Y]\}, \] and the result follows.
\end{proof}


One of the benefits of working in the $\g$-invariant complex is that the contraction $I_X$ is now a chain map when $X \in \g$.  Indeed,
\[ [b+B, I_X] = L_X = 0 \] in $C_\bullet^{\g}(A)$.  These contraction operators obey the following algebra as operators on homology.

\begin{theorem} \label{Theorem-CupProduct}
There is an algebra map $\chi: \Lambda^\bullet \g \to \End(HP_\bullet^\g(A))$ given by
\[ \chi(X_1 \wedge X_2 \wedge \ldots X_k) = I_{X_1} I_{X_2} \ldots I_{X_k}. \]
\end{theorem}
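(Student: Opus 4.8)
The plan is to build $\chi$ from the universal property of the exterior algebra. Since $\g \subset C^1_\g(A,A)$ in the abelian case, Proposition~\ref{Proposition-OperationsRestrictToInvariantComplex} makes $I_X$ a well-defined operator on $C_\bullet^\g(A)$ for each $X \in \g$, and the computation $[b+B, I_X] = L_X = 0$ shows it is a chain map, hence induces an endomorphism $\bar I_X$ of $HP_\bullet^\g(A)$ depending linearly on $X$. Regarding $\Lambda^\bullet \g$ as the quotient of the tensor algebra of $\g$ by the ideal generated by all elements $X \otimes Y + Y \otimes X$, the linear map $X \mapsto \bar I_X$ extends to an algebra map out of the tensor algebra, and it factors through $\Lambda^\bullet \g$ precisely when
\[ \bar I_X \bar I_Y + \bar I_Y \bar I_X = 0 \]
on $HP_\bullet^\g(A)$ for all $X, Y \in \g$. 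So the whole theorem reduces to this anticommutation relation, which is the only real content.

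To prove it, I would write $\equiv$ for equality of induced maps on $HP_\bullet^\g(A)$ and apply the two-derivation homotopy formula of Theorem~\ref{Theorem-CHF-TwoDerivations} twice, once for the ordered pair $(X,Y)$ and once for $(Y,X)$. Adding the two identities gives
\[ I_X I_Y + I_Y I_X = \bigl(L\{X,Y\} + L\{Y,X\}\bigr) + \bigl(I_{X \smile Y} + I_{Y \smile X}\bigr) - [b+B,\, I\{X,Y\} + I\{Y,X\}]. \]
The final commutator is a boundary in the operator complex, so it vanishes on homology, and the task becomes to show that the two parenthesised sums cancel modulo boundaries.

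For the first sum I would establish the chain-level combinatorial identity $L\{X,Y\} + L\{Y,X\} = L_X L_Y - L_{X \circ Y}$, which follows by sorting the composite $L_X L_Y$ according to whether $X$ and $Y$ land in the same tensor slot (giving $L_{X \circ Y}$, since $X \circ Y$ is ordinary composition on $1$-cochains) or in distinct slots (giving exactly the combined terms of $L\{X,Y\} + L\{Y,X\}$). Because $L_X = L_Y = 0$ on $C_\bullet^\g(A)$, this collapses to $L\{X,Y\} + L\{Y,X\} = -L_{X \circ Y}$ on the invariant complex. For the second sum I would compute $\delta(X \circ Y) = -(X \smile Y + Y \smile X)$ directly from the coboundary and Leibniz formulas, and note that $X \circ Y$ is $\g$-invariant: the adjoint action of a derivation $Z \in \g$ is a derivation of $\circ$, so $[Z, X\circ Y] = [Z,X]\circ Y + X \circ [Z,Y] = 0$ as $\g$ is abelian. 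Invariance lets me apply the single-variable Cartan homotopy formula of Theorem~\ref{Theorem-CHF} to $X \circ Y$ inside $C_\bullet^\g(A)$, giving $I_{X\smile Y} + I_{Y \smile X} = L_{X\circ Y} - [b+B, I_{X\circ Y}] \equiv L_{X\circ Y}$. The two contributions are $-L_{X\circ Y}$ and $+L_{X\circ Y}$, which cancel, yielding $I_X I_Y + I_Y I_X \equiv 0$.

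The main obstacle is bookkeeping rather than conceptual: I must verify the $\g$-invariance of $X \circ Y$ (and of $X \smile Y$) so that the auxiliary operators $I_{X\circ Y}$, $I_{X \smile Y}$, and $L_{X\circ Y}$ actually descend to the invariant complex via Proposition~\ref{Proposition-OperationsRestrictToInvariantComplex}, and I must confirm the combinatorial identity for $L_X L_Y$ together with the sign $\delta(X\circ Y) = -(X\smile Y + Y\smile X)$ — it is exactly the matching of these two signs that produces the cancellation. As a cross-check, Corollary~\ref{Corollary-LieDerivative-TwoDerivations}, specialised to $L_X = L_Y = 0$, gives an alternative homotopy-level route to $L\{X,Y\}+L\{Y,X\}\equiv -L_{X\circ Y}$.
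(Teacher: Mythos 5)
Your argument is correct and is essentially the polarization of the paper's proof: the paper establishes the single diagonal relation $I_X I_X \equiv 0$ on $HP_\bullet^\g(A)$ via $0 = L_X L_X = L_{X^2} + 2L\{X,X\}$ and $\delta(\tfrac{1}{2}X^2) = -X \smile X$ together with Theorems~\ref{Theorem-CHF-TwoDerivations} and \ref{Theorem-CHF}, and then invokes the universal property of $\Lambda^\bullet \g = T(\g)/(X \otimes X)$. Over $\C$ that diagonal relation is equivalent to your anticommutation relation $I_X I_Y + I_Y I_X \equiv 0$, and your identities $L\{X,Y\} + L\{Y,X\} = L_X L_Y - L_{X \circ Y}$ and $\delta(X \circ Y) = -(X \smile Y + Y \smile X)$ are precisely the polarized forms of the paper's, so both proofs run on the same two homotopy formulas and the same composition-cochain trick.
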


\begin{proof}
First notice that $X \mapsto I_X$ is a linear mapping.  Next, we shall show that $I_X I_X$ is chain homotopic to zero.  On $C_\bullet^\g(A)$, observe that \[ 0 = L_X L_X = L_{X^2} + 2L\{X, X\} \] where $X^2$ denotes the composition of $X$ with itself.  Thus, \[ L\{X,X\} = -L_{\frac{1}{2}X^2}. \]  Next, notice that \[ \delta\left(\frac{1}{2} X^2\right) = X\smile X. \] By Theorems~\ref{Theorem-CHF-TwoDerivations} and \ref{Theorem-CHF},
\begin{align*}
-[b+B, I\{X,X\}] &= -L\{X,X\} - I_{X \smile X} + I_X I_X\\
&= L_{\frac{1}{2}X^2} - I_{\delta(\frac{1}{2}X^2)} + I_X I_X\\
&= [b+B, I_{\frac{1}{2}X^2}] + I_X I_X,
\end{align*} which proves that $I_X I_X$ is continuously chain homotopic to zero.  By the universal property of the exterior algebra, the map $\chi$ exists as asserted.
\end{proof}

There are some additional simplifications regarding the operator $L\{X,Y\}$ once we pass to $C_\bullet^{\g}(A)$.

\begin{proposition} \label{Proposition-LieDerivative-TwoDerivationsInvariant}
For $X,Y \in \g$, the operator $L\{X,Y\}$ satisfies
\[ L\{X,Y\}(a_0 \otimes \ldots \otimes a_n) = \sum_{i=0}^{n-1} \sum_{j=i+1}^n a_0 \otimes \ldots \otimes X(a_i) \otimes \ldots \otimes Y(a_j) \otimes \ldots \otimes a_n \] on $C_{\bullet}^{\g}(A).$
Additionally,
\[ [b+B, L\{X,Y\} ] = -L_{X \smile Y} \] on $C_{\bullet}^{\g}(A).$
\end{proposition}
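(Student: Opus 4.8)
The plan is to reduce both identities to the single fact that $L_X$ and $L_Y$ vanish on $C_\bullet^\g(A)$ for $X,Y \in \g$, by rewriting each ``error term'' so that a Lie derivative $L_X$ or $L_Y$ is applied \emph{last}, forcing its image into the submodule $\overline{\g \cdot C_\bullet(A)}$ by which we quotient. For the bookkeeping I would introduce, for a derivation $Z$ and an index $i$, the single-slot operator $Z_i \colon (a_0, \ldots, a_n) \mapsto (a_0, \ldots, Z(a_i), \ldots, a_n)$, so that $L_Z = \sum_{i=0}^n Z_i$. The one structural input used throughout is abelianness of $\g$: because $X$ and $Y$ commute as operators on $A$, one has $X_0 Y_0 = Y_0 X_0$, and a short direct computation gives the slot-commutation identity $[Y_0, L_X] = 0$ (all terms cancel except in slot $0$, where the contribution is $(YX-XY)(a_0) = 0$), together with its symmetric counterpart $[X_0, L_Y] = 0$.

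For the first identity, I would subtract the asserted formula $M\{X,Y\} := \sum_{0 \le i < j \le n} X_i Y_j$ from the defining formula for $L\{X,Y\}$. The two double sums ranging over $1 \le i < j \le n$ cancel, and the remaining terms assemble into
\[ M\{X,Y\} - L\{X,Y\} = X_0(L_Y - Y_0) - Y_0(L_X - X_0) = X_0 L_Y - Y_0 L_X, \]
the last equality using $X_0 Y_0 = Y_0 X_0$. Invoking the slot-commutation identities to move the Lie derivatives to the outside, this equals $L_Y X_0 - L_X Y_0$, whose image lies in $\mathrm{im}(L_X) + \mathrm{im}(L_Y) \subseteq \overline{\g \cdot C_\bullet(A)}$. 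Hence $M\{X,Y\}$ and $L\{X,Y\}$ induce the same operator on $C_\bullet^\g(A)$, which is the claimed simplified formula.

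For the second identity, I would start from Corollary~\ref{Corollary-LieDerivative-TwoDerivations}, which on $C_\bullet(A)$ reads $[b+B, L\{X,Y\}] = -L_{X \smile Y} + L_Y I_X - I_Y L_X$, and show the last two terms die on $C_\bullet^\g(A)$. The term $L_Y I_X$ has $L_Y$ applied last, so its image sits in $\mathrm{im}(L_Y) \subseteq \overline{\g \cdot C_\bullet(A)}$. For $I_Y L_X$, abelianness gives $Y \in C^\bullet_\g(A,A)$, so by Proposition~\ref{Proposition-OperationsRestrictToInvariantComplex} the operator $I_Y$ descends to $C_\bullet^\g(A)$ and hence preserves $\overline{\g \cdot C_\bullet(A)}$; since $L_X$ already maps into this submodule, so does $I_Y L_X$. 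Both error terms therefore vanish on the invariant complex, leaving $[b+B, L\{X,Y\}] = -L_{X \smile Y}$ there. Both sides descend to the quotient: the left side because $b+B$ does (Proposition~\ref{Proposition-LieDerivativeCommutator}) while $L\{X,Y\}$ does (being a higher operation commuting with $L_Z$ for abelian $\g$), and the right side because $[Z, X \smile Y] = [Z,X] \smile Y + X \smile [Z,Y] = 0$ for $Z \in \g$, so that $X \smile Y \in C^\bullet_\g(A,A)$.

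The main obstacle, and the step demanding genuine care, is that the single-slot operators $X_0, Y_0$ do \emph{not} preserve $\overline{\g \cdot C_\bullet(A)}$, so one may not simply substitute ``$L_X = 0$'' inside an arbitrary composition. The entire argument for the first identity rests on first commuting each Lie derivative to the leftmost position via $[Y_0, L_X] = 0$, after which the vanishing is immediate; verifying this slot-commutation identity is the computational heart of the proof, and it is precisely here that the abelianness hypothesis is indispensable.
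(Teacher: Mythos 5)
Your proof is correct and follows essentially the same route as the paper's: both parts hinge on rewriting the discrepancy between the two formulas as $L_Y \circ X_0 - L_X \circ Y_0$ (using $[X,Y]=0$ exactly where you use it), whose image lies in $\overline{\g \cdot C_\bullet(A)}$, and on killing the terms $L_Y I_X$ and $I_Y L_X$ from Corollary~\ref{Corollary-LieDerivative-TwoDerivations} because $L_X = L_Y = 0$ on the invariant complex. Your extra care about why $I_Y$ preserves the submodule and why each operator descends is a worthwhile elaboration of points the paper leaves implicit, but the argument is the same.
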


\begin{proof}
Notice that
\begin{align*}
&L_Y(X(a_0), a_1, \ldots , a_n) - L_X(Y(a_0), a_1, \ldots , a_n)\\
&\quad = \sum_{j=1}^nX(a_0) \otimes \ldots \otimes Y(a_j) \otimes \ldots \otimes a_n - \sum_{i=1}^n Y(a_0) \otimes \ldots \otimes X(a_i) \otimes \ldots \otimes a_n,\\
\end{align*} using the fact that $[X,Y] = 0$.  So
\begin{align*}
&L\{X,Y\}(a_0 \otimes \ldots \otimes a_n) + L_Y(X(a_0) \otimes a_1\otimes \ldots \otimes a_n) - L_X(Y(a_0) \otimes a_1 \otimes \ldots \otimes a_n)\\
& \qquad = \sum_{i=0}^{n-1} \sum_{j=i+1}^n a_0 \otimes \ldots \otimes X(a_i) \otimes \ldots \otimes Y(a_j) \otimes \ldots \otimes a_n
\end{align*} gives the desired conclusion.

The formula \[ [b+B, L\{X,Y\} ] = -L_{X \smile Y} \] follows from Theorem \ref{Theorem-CHF-TwoDerivations} in light of the fact that $L_X = L_Y = 0$ on $C_\bullet^{\g}(A)$.
\end{proof}

\subsection{Connections on the $\g$-invariant complex} \label{Subsection-ConnectionsOngInvariantComplex}

The following situation is modeled on what we see with the noncommutative tori deformation.  Suppose $A_J$ is the algebra of sections of a smooth deformation of Fr\'{e}chet algebras $\{A_t\}_{t \in J}$.  Suppose $\g$ is an abelian Lie algebra of $C^\infty(J)$-linear derivations on $A_J$.  Suppose $\nabla$ is a $\g$-invariant connection on $A_J$ for which
\[ E := \delta \nabla = \sum_{i=1}^r X_i \smile Y_i, \] where $X_i, Y_i \in \g$.  This means that
\[ \nabla(a_1 a_2) = \nabla(a_1)a_2 + a_1\nabla(a_2) + \sum_{i=1}^r X_i(a_1)Y_i(a_2),\qquad a_1,a_2 \in A_J. \]


\begin{proposition}
In the above situation, the Gauss-Manin connection
\[ \nabla_{GM} = L_\nabla - I_E \] descends to the $\g$-invariant complex $C_\bullet^\g(A_J)$ and therefore to a connection on the $\g$-invariant periodic cyclic homology $HP_\bullet^\g(A_J)$.
\end{proposition}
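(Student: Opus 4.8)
The plan is to show that the Gauss-Manin connection $\nabla_{GM} = L_\nabla - I_E$ descends to the quotient complex $C_\bullet^\g(A) = C_\bullet(A)/\overline{\g \cdot C_\bullet(A)}$. Since $\nabla_{GM}$ is already known to be a chain map on $C_{\per}(A)$, and homology of the quotient complex computes $HP_\bullet^\g(A)$, it suffices to verify that $\nabla_{GM}$ preserves the closed submodule $\overline{\g \cdot C_\bullet(A)}$ generated by the elements $L_Z \omega$ for $Z \in \g$. By continuity and linearity, this reduces to checking that $\nabla_{GM}$ maps each $L_Z \omega$ back into the submodule, i.e. that for every $Z \in \g$ the commutator $[\nabla_{GM}, L_Z]$, applied to any chain, lands in $\overline{\g \cdot C_\bullet(A)}$.

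The key computation is therefore to analyze $[\nabla_{GM}, L_Z] = [L_\nabla, L_Z] - [I_E, L_Z]$ for a generator $Z \in \g$. First I would treat the Lie-derivative term. Since $Z$ is a derivation with $[\nabla, Z] = 0$ by hypothesis, and using the identity $[L_X, L_D] = L_{[X,D]}$ from the proof of Proposition~\ref{Proposition-OperationsRestrictToInvariantComplex}, one expects $[L_\nabla, L_Z] = L_{[\nabla, Z]} = L_{\delta \nabla}$-type terms to vanish or to themselves be Lie derivatives along elements of $\g$. More precisely, $\nabla$ is a connection rather than a genuine cochain, so I would use the Leibniz-rule bookkeeping: writing $E = -\delta\nabla$, the relation $[\nabla, Z] = 0$ controls how $L_\nabla$ interacts with $L_Z$, and the output should be expressible as $L_{Z'}$ for some $Z' \in \g$, hence lies in $\overline{\g \cdot C_\bullet(A)}$. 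For the contraction term, I would invoke the identity $[L_Z, I_D] = I_{[Z,D]}$ (again from Proposition~\ref{Proposition-OperationsRestrictToInvariantComplex}) with $D = E = \sum_i X_i \smile Y_i$; since $Z$ commutes with every $X_i$ and $Y_i$ and the bracket is a derivation of the cup product, $[Z, E] = \sum_i \bigl([Z,X_i] \smile Y_i + X_i \smile [Z,Y_i]\bigr) = 0$, so $[I_E, L_Z] = -I_{[Z,E]} = 0$.

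The main obstacle is handling $L_\nabla$ carefully, since $\nabla$ is not an element of $C^\bullet(A,A)$ and the clean commutator identities were stated for genuine derivations $D$. The hypotheses $[\nabla, X_i] = [\nabla, Y_i] = 0$ are exactly what is needed, but one must verify that the formula $[L_\nabla, L_Z] = L_{[\nabla, Z]}$ remains valid at the level of operators on chains when $\nabla$ carries the Leibniz correction term $E$. I would do this by unwinding $L_\nabla$ through Proposition~\ref{Proposition-TensorAndDualConnections}: since $\nabla$ extends to $C_{\per}(A)$ via the tensor-product connection and $L_Z$ acts by the derivation $Z$ in each tensor slot, the graded commutator computes the action of $[\nabla, Z]$ slot by slot, and the hypothesis forces it to vanish. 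Once $[\nabla_{GM}, L_Z]$ is shown to take values in $\overline{\g \cdot C_\bullet(A)}$ for each generator $Z$, closedness of the submodule and continuity of all operators finish the descent; the induced map on $C_\bullet^\g(A)$ still satisfies the Leibniz rule over $C^\infty(J)$ because that property is inherited from $\nabla_{GM}$ on $C_{\per}(A)$, so it is a connection, and since it commutes with $b+B$ it descends to $HP_\bullet^\g(A)$.
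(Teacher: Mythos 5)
Your proof is correct and follows essentially the same route as the paper: the contraction term is handled by $[L_Z, I_E] = I_{[Z,E]}$ together with $[Z,E] = \sum_i\bigl([Z,X_i]\smile Y_i + X_i\smile[Z,Y_i]\bigr) = 0$, which is exactly the paper's observation that $E \in C^2_{\g}(A,A)$ combined with Proposition~\ref{Proposition-OperationsRestrictToInvariantComplex}, and the Lie-derivative term vanishes because $[\nabla, Z]=0$. You are in fact more explicit than the paper about why $[L_\nabla, L_Z]=0$ even though $\nabla$ is not a genuine Hochschild cochain (the slot-by-slot computation does not use $C^\infty(J)$-linearity), a point the paper leaves implicit; the only blemish is the passing phrase identifying $[\nabla,Z]$ with ``$L_{\delta\nabla}$-type terms,'' which conflates two different objects, but your subsequent argument does not rely on it.
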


\begin{proof}
Since $\nabla$ is $\g$-invariant, so is $E = \delta \nabla$ because $\g$-invariant cochains form a subcomplex.  The statement follows from Proposition \ref{Proposition-OperationsRestrictToInvariantComplex}.
\end{proof}

Our main reason for working with the $\g$-invariant complex is that we can define another connection on $HP_\bullet^{\g}(A_J)$ which is easier to work with than $\nabla_{GM}$.  Recall that the connection $L_{\nabla}$ satisfies
\[ [b+B, L_\nabla] = L_E = \sum_{i=1}^r L_{X_i \smile Y_i}. \]
Thus, by Proposition~\ref{Proposition-LieDerivative-TwoDerivationsInvariant},
\[ \widetilde{\nabla} = L_\nabla + \sum_{i=1}^r L\{X_i,Y_i\} \]
is a connection on $C_{\per}^\g(A_J)$ that commutes with $b+B$ and therefore descends to a connection on $HP_\bullet^{\g}(A_J)$.  We emphasize that $\widetilde{\nabla}$ does not commute with $b+B$ on the ordinary periodic cyclic complex $C_{\per}(A_J)$.

\begin{remark}
There is a conceptual explanation for the form of this connection $\widetilde{\nabla}$.  Let $\H$ be the Hopf algebra whose underlying algebra is the symmetric algebra $S(\g \oplus \Span\{\nabla\})$.  The coproduct $\Delta: \H \to \H \otimes \H$ is the unique algebra map that satisfies
\[ \Delta(X_i) = X_i \otimes 1 + 1 \otimes X_i, \qquad \Delta(Y_i) = Y_i \otimes 1 + 1 \otimes Y_i, \]
\[ \Delta(\nabla) = \nabla \otimes 1 + 1 \otimes \nabla + \sum_{i=1}^r X_i \otimes Y_i. \]
In the $r = 1$ case, these are the defining relations of the Hopf algebra of polynomial functions on the three-dimensional Heisenberg group.  As an algebra, $\H$ acts on $A_J$ in the obvious way, and this action is a Hopf action in the sense that for all $h \in \H$,
\[ h(a_1a_2) = \sum h_{(1)}(a_1)h_{(2)}(a_2), \] where $\Delta(h) = \sum h_{(1)} \otimes h_{(2)}.$  When a Hopf algebra $\H$ acts (say, on the left) on two spaces $V$ and $W$, there is a canonical action, called the \emph{diagonal action}, of $\H$ on $V \otimes W$ given by
\[ h(v \otimes w) = \sum h_{(1)}(v) \otimes h_{(2)}(w). \]  The connection $\widetilde{\nabla}$ on $C_n^{\g}(A_J)$ is none other than the diagonal action of $\nabla$ on $A_J^{\potimes(n+1)}$ after passing to the quotient.
\end{remark}

\begin{lemma} \label{Lemma-ConnectionsCommute}
On the invariant complex $C_\bullet^\g(A_J)$, $[\nabla_{GM}, \widetilde{\nabla}] = 0.$
\end{lemma}

\begin{proof}
This is a straightforward, though tedious, computation.  For details, see \cite{Yashinski-Thesis}.
\end{proof}

\begin{proposition} \label{Proposition-NablaGMIsNilpotentPerturbation}
As operators on $HP_\bullet^{\g}(A_J)$,
\[ \nabla_{GM} = \widetilde{\nabla} + \sum_{i=1}^r \chi(X_i \wedge Y_i). \]
\end{proposition}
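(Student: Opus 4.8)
The plan is to compute the difference $\nabla_{GM} - \widetilde{\nabla}$ directly on the invariant chain complex $C_\bullet^\g(A)$ and then pass to homology, where two simplifications occur: a commutator with $b+B$ drops out, and the cyclic contractions anticommute. First I would record the difference at the chain level. Since $\nabla_{GM} = L_\nabla - I_E$ with $E = \sum_i X_i \smile Y_i$, and $\widetilde{\nabla} = L_\nabla + \sum_i L\{X_i, Y_i\}$, the term $L_\nabla$ cancels, and using linearity $I_E = \sum_i I_{X_i \smile Y_i}$ one obtains
\[ \nabla_{GM} - \widetilde{\nabla} = -\sum_{i=1}^N \bigl( I_{X_i \smile Y_i} + L\{X_i, Y_i\} \bigr) \]
as operators on $C_\bullet^\g(A)$. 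Here I would note that every operator is legitimately defined on the invariant complex: each $X_i, Y_i$ lies in $C^1_\g(A,A)$ because $\g$ is abelian, so $I_{X_i \smile Y_i}$ is well-defined by Proposition~\ref{Proposition-OperationsRestrictToInvariantComplex}, while $L\{X_i, Y_i\}$ and $I\{X_i, Y_i\}$ are well-defined by the subsequent proposition.

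Next I would invoke the two-derivation Cartan homotopy formula, Theorem~\ref{Theorem-CHF-TwoDerivations}, which holds on $C_\bullet^\g(A)$ since all its terms descend there. Rearranged, it reads
\[ I_{X_i \smile Y_i} + L\{X_i, Y_i\} = [b+B, I\{X_i, Y_i\}] + I_{Y_i} I_{X_i}, \]
and substituting gives
\[ \nabla_{GM} - \widetilde{\nabla} = -\sum_{i=1}^N [b+B, I\{X_i, Y_i\}] - \sum_{i=1}^N I_{Y_i} I_{X_i}. \]
Passing to homology on $HP_\bullet^\g(A)$, each commutator $[b+B, I\{X_i, Y_i\}]$ is a boundary in the operator complex and therefore acts as zero, leaving $\nabla_{GM} - \widetilde{\nabla} = -\sum_i I_{Y_i} I_{X_i}$.

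Finally I would use the exterior-algebra structure of Theorem~\ref{Theorem-CupProduct}: since $\chi$ is an algebra map out of $\Lambda^\bullet \g$, the relation $I_X I_X = 0$ on homology polarizes to the anticommutativity $I_{Y_i} I_{X_i} = -I_{X_i} I_{Y_i}$. Hence $-I_{Y_i} I_{X_i} = I_{X_i} I_{Y_i} = \chi(X_i \wedge Y_i)$, and summing over $i$ yields the claimed identity $\nabla_{GM} = \widetilde{\nabla} + \sum_i \chi(X_i \wedge Y_i)$ on $HP_\bullet^\g(A)$. The computation is otherwise mechanical, so the one point that demands care is bookkeeping on the order of the contractions: Theorem~\ref{Theorem-CHF-TwoDerivations} naturally produces $I_{Y_i} I_{X_i}$, and it is precisely the anticommutativity valid only on homology that converts this into the asserted $\chi(X_i \wedge Y_i) = I_{X_i} I_{Y_i}$.
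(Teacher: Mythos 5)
Your argument is correct and follows essentially the same route as the paper: cancel $L_\nabla$, rewrite $I_{X_i\smile Y_i}+L\{X_i,Y_i\}$ via Theorem~\ref{Theorem-CHF-TwoDerivations} as a $(b+B)$-commutator plus $I_{Y_i}I_{X_i}$, and convert $-I_{Y_i}I_{X_i}$ into $\chi(X_i\wedge Y_i)$ on homology. The paper phrases the last step as $-\chi(Y_i\wedge X_i)=\chi(X_i\wedge Y_i)$, which is exactly the anticommutativity you invoke from Theorem~\ref{Theorem-CupProduct}.
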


\begin{proof}
We have
\begin{align*}
\nabla_{GM} - \widetilde{\nabla} &= -I_E - \sum_{i=1}^r L\{X_i,Y_i\}\\
&= -\sum_{i=1}^r \Big( I_{X_i \smile Y_i} + L\{X_i,Y_i\} \Big)\\
&= -\sum_{i=1}^r \Big([b+B, I\{X_i,Y_i\}] + I_{Y_i}I_{X_i}\Big)\\
&= -\sum_{i=1}^r [b+B, I\{X_i,Y_i\}] - \sum_{i=1}^r \chi(Y_i \wedge X_i),
\end{align*} using Theorem~\ref{Theorem-CHF-TwoDerivations}.  So at the level of homology,
\[ \nabla_{GM} = \widetilde{\nabla} + \sum_{i=1}^r \chi(X_i \wedge Y_i). \]
\end{proof}

Let us use the notation $\Omega = \sum_{i=1}^r X_i \wedge Y_i \in \Lambda^2 \g$.  So in $HP_\bullet^{\g}(A_J)$,
\[ \nabla_{GM} = \widetilde{\nabla} + \chi(\Omega). \]

\begin{theorem} \label{GMiffTildeNabla}
As connections on $HP_\bullet^{\g}(A_J)$, $\nabla_{GM}$ is integrable if and only if $\widetilde{\nabla}$ is integrable.
\end{theorem}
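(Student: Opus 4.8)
The plan is to recognize $\nabla_{GM}$ as a nilpotent perturbation of $\widetilde\nabla$ on $HP_\bullet^\g(A)$ and then invoke Proposition~\ref{Proposition-NilpotentPerturbation}, whose proof in fact establishes the full biconditional that $\nabla_0$ is integrable if and only if $\nabla_0 - F$ is. Concretely, I would set $\Omega = \sum_{i=1}^N X_i \wedge Y_i \in \Lambda^2 \g$ and $F = -\chi(\Omega) = -\sum_{i=1}^N \chi(X_i \wedge Y_i)$, so that Proposition~\ref{Proposition-NablaGMIsNilpotentPerturbation} reads $\nabla_{GM} = \widetilde\nabla - F$ as connections on $HP_\bullet^\g(A)$. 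It then suffices to verify the two hypotheses of Proposition~\ref{Proposition-NilpotentPerturbation}, namely that $F$ is nilpotent and that $[\widetilde\nabla, F] = 0$.

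For nilpotency, I would use that $\chi$ is an algebra homomorphism (Theorem~\ref{Theorem-CupProduct}), which gives $F^k = (-1)^k \chi(\Omega^k)$ with $\Omega^k \in \Lambda^{2k}\g$. Since $\g = \Span\{X_i, Y_i \mid 1 \le i \le N\}$ has dimension at most $2N$, the exterior power $\Lambda^{2k}\g$ vanishes once $k > N$, and hence $F^{N+1} = 0$. Note also that $F$ is continuous, being a finite sum of compositions $I_{X_i} I_{Y_i}$ of continuous $C^\infty(J)$-linear operators.

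For the commutativity, I would invoke Lemma~\ref{Lemma-ConnectionsCommute}, which gives $[\nabla_{GM}, \widetilde\nabla] = 0$ on $C_\bullet^\g(A)$; since all operators involved descend to homology, this relation holds on $HP_\bullet^\g(A)$. Using $\nabla_{GM} = \widetilde\nabla - F$ together with $[\widetilde\nabla, \widetilde\nabla] = 0$, and recalling that the commutator of a connection with a $C^\infty(J)$-linear operator is again $C^\infty(J)$-linear, one computes $0 = [\nabla_{GM}, \widetilde\nabla] = [\widetilde\nabla - F, \widetilde\nabla] = -[F, \widetilde\nabla] = [\widetilde\nabla, F]$, as required. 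With both hypotheses in hand, Proposition~\ref{Proposition-NilpotentPerturbation} yields the desired equivalence.

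The main subtlety I expect is purely bookkeeping rather than computational: one must be careful that the perturbation identity of Proposition~\ref{Proposition-NablaGMIsNilpotentPerturbation} and the commutation relation of Lemma~\ref{Lemma-ConnectionsCommute}, which a priori are proved at the chain level on $C_\bullet^\g(A)$, are applied at the level of homology $HP_\bullet^\g(A)$, where the stated identity $\nabla_{GM} = \widetilde\nabla + \chi(\Omega)$ actually holds. Once this is granted, the heavy lifting has all been done in the cited results, and the one-to-one correspondence of parallel sections supplied by the proof of Proposition~\ref{Proposition-NilpotentPerturbation} goes through because $\exp((t-t_0)F)$ is a finite algebraic sum and hence a continuous operator.
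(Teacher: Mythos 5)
Your proof is correct and follows essentially the same route as the paper: the paper also writes $\nabla_{GM}$ as a nilpotent perturbation of $\widetilde{\nabla}$ via Proposition~\ref{Proposition-NablaGMIsNilpotentPerturbation}, gets nilpotency of $\sum_i \chi(X_i \wedge Y_i)$ from Theorem~\ref{Theorem-CupProduct}, gets the commutation hypothesis from Lemma~\ref{Lemma-ConnectionsCommute}, and then applies Proposition~\ref{Proposition-NilpotentPerturbation}. You simply spell out the details (the exterior-algebra degree count for nilpotency and the passage to homology) that the paper leaves implicit.
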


\begin{proof}
Since $\Omega$ is a nilpotent element of the algebra $\Lambda^\bullet \g$, we see $\chi(\Omega)$ is a nilpotent operator by Theorem~\ref{Theorem-CupProduct}.  Since $\nabla_{GM}$ is a perturbation of $\widetilde{\nabla}$ by a nilpotent $\widetilde{\nabla}$-parallel operator, the result follows from \cite[Proposition 4.12]{Yashinski-SmoothDefs}.
\end{proof}

\section{Integrating $\widetilde{\nabla}$ for noncommutative tori}

%
%
%

In this section, we specialize to the noncommutative tori deformation $\{ \mathcal{A}_{t\Theta}\}_{t \in J}$ for a given $n \times n$ skew-symmetric real matrix $\Theta$ and $J = \R$.  Let $A_J$ denote the algebra of sections of this deformation and let $\g = \Span\{\delta_1, \ldots , \delta_n\}$, which is an abelian Lie algebra of derivations on $A_J$.  The canonical connection $\nabla = \frac{d}{dt}$ is $\g$-invariant, and
\[ E = \delta \nabla = \frac{1}{2\pi i} \sum_{j>k} \theta_{jk} \cdot \delta_j \smile \delta_k. \]

As in the previous section, we can define the connection
\[ \widetilde{\nabla} = L_\nabla + \frac{1}{2\pi i} \sum_{j > k} \theta_{jk}\cdot L\{ \delta_j, \delta_k \} \] on the $\g$-invariant complex $C_\bullet^{\g}(A_J)$ which descends to a connection on $HP_\bullet^{\g}(A_J)$.  By the results of the previous section, $\widetilde{\nabla}$ is a nilpotent perturbation of $\nabla_{GM}$ on $HP_\bullet^{\g}(A_J)$, and so $\widetilde{\nabla}$ is integrable if and only if $\nabla_{GM}$ is integrable.

We shall now show that we are not losing anything in passing to $\g$-invariant cyclic homology, in that the canonical map $HP_\bullet(\A_\Theta) \to HP_\bullet^{\g}(\A_\Theta)$ is an isomorphism.

\begin{theorem} \label{Theorem-InvariantChainEquivalence}
The canonical map $C_{\per}(\A_\Theta) \to C_{\per}^{\g}(\A_\Theta)$ induces an isomorphism $HP_\bullet(\A_\Theta) \cong HP_\bullet^{\g}(\A_\Theta)$
\end{theorem}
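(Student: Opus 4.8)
The plan is to exploit the torus action on $\atheta$ to split the cyclic complex into weight spaces, identify the $\g$-invariant complex with the weight-zero part, and contract every nonzero weight space by means of the Cartan homotopy formula (Theorem~\ref{Theorem-CHF}).

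First I would introduce the weight grading. The torus $\T^n$ acts on $\atheta = \S(\Z^n)$ with $u^\alpha$ of weight $\alpha$, so each $C_n(\atheta)$ splits according to the total weight $\beta = \alpha_0 + \cdots + \alpha_n \in \Z^n$ of an elementary tensor $u^{\alpha_0} \otimes \cdots \otimes u^{\alpha_n}$. Since the twisted convolution product only rescales $u^{\alpha}u^{\alpha'}$ by a phase while leaving its weight $\alpha + \alpha'$ unchanged, the operators $b$ and $B$ preserve this grading, and each total-weight-$\beta$ subspace is a subcomplex $C_{\per}(\atheta)_\beta$. A direct computation using the degree-one formula for $L_{\delta_j}$ shows that $L_{\delta_j}$ acts on the weight-$\beta$ component as multiplication by the integer $\beta_j$. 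Consequently $\overline{\g \cdot C_\bullet(\atheta)}$ is exactly $\ker P_0$, the closed subspace of chains with vanishing weight-zero component: for $\beta \neq 0$ one chooses $j$ with $\beta_j \neq 0$ and writes $\eta_\beta = L_{\delta_j}(\beta_j^{-1}\eta_\beta)$, while conversely every $L_{\delta_j}\omega$ has zero weight-zero part, and finite-weight-support chains are dense by rapid decay. Here $P_0$ denotes the projection onto weight zero. Thus the quotient map $\pi \colon C_{\per}(\atheta) \to C_{\per}^\g(\atheta)$ is identified with $P_0$, and the inclusion $\iota$ of the weight-zero subcomplex is a section with $\pi \iota = \mathrm{id}$.

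It then remains to show $\iota\pi = P_0 \simeq \mathrm{id}$, that is, that the projection $\mathrm{id} - P_0$ onto nonzero weights is chain homotopic to zero. For $\beta \neq 0$ the Cartan homotopy formula gives $[b+B, I_{\delta_j}] = L_{\delta_j}$, since the derivation $\delta_j$ is a Hochschild cocycle; on $C_{\per}(\atheta)_\beta$ this reads $[b+B, I_{\delta_j}] = \beta_j \cdot \mathrm{id}$, so $\beta_j^{-1} I_{\delta_j}$ contracts the $\beta$-component whenever $\beta_j \neq 0$. To assemble these into a single operator I would partition the nonzero weights by the smallest index $j$ with $\beta_j \neq 0$: letting $P_{S_j}$ be the projection onto $S_j = \{\beta : \beta_1 = \cdots = \beta_{j-1} = 0,\ \beta_j \neq 0\}$ and $L_{\delta_j}^{-1}$ the fiberwise inverse of $L_{\delta_j}$ on $S_j$, set
\[ h = \sum_{j=1}^n I_{\delta_j}\, L_{\delta_j}^{-1} P_{S_j}. \]
Because $b+B$ preserves weights one has $[b+B, P_{S_j}] = 0$, and $L_{\delta_j}^{-1}$ is a function of $L_{\delta_j}$ so commutes with $I_{\delta_j}$ (using $[L_{\delta_j}, I_{\delta_j}] = I_{[\delta_j,\delta_j]} = 0$ from Proposition~\ref{Proposition-OperationsRestrictToInvariantComplex}); hence the computation collapses to $[b+B, h] = \sum_{j=1}^n P_{S_j} = \mathrm{id} - P_0$, exactly as required, exhibiting $\iota$ as a chain-homotopy inverse of $\pi$.

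The main obstacle is continuity: $h$ must be a continuous operator on the Fr\'echet complex $C_{\per}(\atheta)$. This is where integrality of the weights is essential. On $S_j$ one has $|\beta_j| \geq 1$, so $L_{\delta_j}^{-1}$ is a bounded Fourier multiplier that never divides by small numbers; the weight projections $P_{S_j}$ are continuous thanks to the rapid-decay topology of $\S(\Z^n)$; and $I_{\delta_j}$ is continuous. As $h$ is then a finite sum of continuous operators, it is continuous, so $\pi$ is a continuous chain homotopy equivalence inducing $HP_\bullet(\atheta) \cong HP_\bullet^\g(\atheta)$. I expect the only genuine work to lie in verifying these continuity estimates carefully on the projective tensor powers, the crucial point being that nonzero integer weights stay bounded away from zero --- the feature that fails for a general one-parameter deformation and that makes the noncommutative torus tractable.
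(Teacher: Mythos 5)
Your proposal is correct and is essentially the paper's own argument: both identify the invariant complex with the weight-zero subcomplex under the $\Z^n$-grading on which $L_{\delta_j}$ acts by the integer $\beta_j$, and both contract the nonzero-weight part using $[b+B, I_{\delta_j}] = L_{\delta_j}$ together with the bounded inverse of $L_{\delta_j}$ on weights with $\beta_j \neq 0$ (the paper's operator $N_j$ is your $L_{\delta_j}^{-1}P_{\{\beta_j\neq 0\}}$). The only cosmetic difference is that the paper performs the contraction iteratively through the nested subcomplexes $M_j = \cap_{k\leq j}\ker L_{\delta_k}$ with homotopies $h_j = N_j I_{\delta_j}$, whereas you assemble the same homotopy in one step by partitioning the nonzero weights according to their first nonvanishing component.
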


\begin{proof}
The space $\A_\Theta = \S(\Z^n)$ is a topological direct sum graded by $\Z^n$.  For each $1 \leq j \leq n$, the $j$-th degree of a homogenous element $a = u_1^{\alpha_1}\cdot \ldots \cdot u_n^{\alpha_n}$ is $\deg_j a = \alpha_j$.  The complex $C_{\per}(\A_\Theta)$ is also graded by $\Z^n$.  If $\omega = a_0 \otimes \ldots \otimes a_m$ is an elementary tensor of homogeneous elements $a_i \in \A_\Theta$, then define
\[ \deg_j \omega = \sum_{i=0}^m \deg_j a_i. \]
The Lie derivative $L_{\delta_j}$ has the property that
\[ L_{\delta_j}\omega = (2\pi i \cdot \deg_j \omega)\omega \]
for all homogeneous chains.  From this, we see that $\g \cdot C_{\per}(\A_\Theta)$ is a closed direct summand of $C_{\per}(\A_\Theta)$ which is complemented by $\bigcap_{j=1}^n \ker L_{\delta_j}$.  This complement is just the subspace of homogeneous elements whose $\Z^n$-grading is $(0, 0, \ldots, 0)$.  By the Cartan homotopy formula (Theorem \ref{Theorem-CHF}), $L_{\delta_j} = 0$ as an operator on $HP_\bullet(\A_\Theta)$.  It follows that the summand $\g\cdot C_{\per}(\A_\Theta)$ is acyclic, and so the quotient map
\[ C_{\per}(\A_\Theta) \to C_{\per}^\g(\A_\Theta) \]
is a quasi-isomorphism.
\end{proof}

The above proof can be carried out because the action of $\g$ by derivations on $\A_\Theta$ is the infinitesimal of an action of the Lie group $\T^n$ by algebra automorphisms.  Thus $\A_\Theta$ decomposes as a topological direct sum of eigenspaces indexed by $\Z^n$ for the action of $\g$.
Note that the same proof shows that $C_{\per}(A_J) \to C_{\per}^\g(A_J)$ is a quasi-isomorphism for the algebra of sections $A_J$ of the deformation.  Here, the homology theories are considered over the ground ring $C^\infty(J)$.

\begin{theorem} \label{Theorem-TildeNablaIsIntegrable}
For the algebra of sections $A_J$ of the noncommutative tori deformation, the connection $\widetilde{\nabla}$ is integrable on $C_{\per}(A_J)$ and consequently on $C_{\per}^{\g}(A_J)$.
\end{theorem}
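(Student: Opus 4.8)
The strategy hinges on two features of $\widetilde{\nabla}$ that fail for $\nabla_{GM}$: it preserves cochain degree, and it is a \emph{constant-coefficient} perturbation of the canonical connection. By Proposition~\ref{Proposition-FormOfConnection} I would dualize the displayed formula and write $\widetilde{\nabla} = \frac{d}{dt} - G^*$ on $C^{\per}(A)$, where $G^*$ is the transpose of
\[ G = 2\pi i \sum_{j>k} \theta_{jk}\, L\{\delta_j, \delta_k\} \in \End_{C^\infty(J)}(C_{\per}(A)). \]
Since $L_\nabla$ is identified with $\frac{d}{dt}$ under $C_{\per}(A) \cong C^\infty(J, C_{\per}^{\C}(\S(\Z^n)))$ (as in the proof of homotopy invariance), and since $L\{\delta_j, \delta_k\}$ is built only from the $t$-independent derivations $\delta_j$ with no reference to the multiplication $m_t$, the operator $G$ is continuous, $C^\infty(J)$-linear, of degree $0$, and independent of $t$; in particular $[\frac{d}{dt}, G] = 0$. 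As $G$ has generally nonzero spectrum it is not nilpotent, so Proposition~\ref{Proposition-NilpotentPerturbation} does not apply and the parallel transport equation must be solved directly.

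The key algebraic observation is that $G$ acts \emph{diagonally}. Using $\delta_j(u^{\alpha^i}) = \alpha_j^i\, u^{\alpha^i}$ and reading off the formula for $L\{\delta_j,\delta_k\}$, on the elementary tensors $(u^{\alpha^0}, \ldots, u^{\alpha^m})$, whose span is dense in $C_m(\atheta)$, one finds
\[ G\,(u^{\alpha^0}, \ldots, u^{\alpha^m}) = \mu_{\vec\alpha}\,(u^{\alpha^0}, \ldots, u^{\alpha^m}), \qquad \mu_{\vec\alpha} = 2\pi i \sum_{j>k}\theta_{jk}\Bigl( \sum_{1 \le i < l \le m}\alpha_j^i \alpha_k^l + \alpha_k^0 \sum_{i=1}^m \alpha_j^i \Bigr). \]
Because the $\theta_{jk}$ are real and the $\alpha$'s are integers, each $\mu_{\vec\alpha}$ is \emph{purely imaginary}, with $|\mu_{\vec\alpha}| \leq C(1+|\vec\alpha|)^2$. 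Under the identification $\S(\Z^n)^{\potimes(m+1)} \cong \S(\Z^{n(m+1)})$ from the proof of Theorem~\ref{Theorem-InvariantChainEquivalence}, both $G$ and $G^*$ become multiplication by the diagonal sequence $(\mu_{\vec\alpha})$.

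Because $\widetilde{\nabla}$ preserves degree, the equation $\frac{d}{dt}\phi = G^*\phi$ decouples across cochain groups, and in each degree the candidate parallel transport is the diagonal exponential $\exp((t-t_0)G^*)$, multiplying the $\vec\alpha$-component by $\exp((t-t_0)\mu_{\vec\alpha})$. This is exactly the point where the Remark following Theorem~\ref{Theorem-GlobalExistenceAndUniquenessBanachSpace} must be confronted: for a generic continuous operator on a nuclear Fr\'{e}chet space such an exponential diverges. What rescues us is the imaginary spectrum, which makes each factor $\exp((t-t_0)\mu_{\vec\alpha})$ unimodular; multiplication by a unimodular sequence does not increase any Schwartz seminorm $p_k$, hence is continuous with operator seminorm $\leq 1$ on each $C^m(A)$. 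I expect the main obstacle to be the verification of \emph{smoothness in $t$}, i.e.\ that $\phi_0 \mapsto \bigl(t \mapsto \exp((t-t_0)G^*)\phi_0\bigr)$ is continuous into $C^\infty(J, \S(\Z^{n(m+1)})^*)$ rather than merely pointwise defined. Differentiating brings down a factor $(\mu_{\vec\alpha})^r$, so the $r$-th $t$-derivative is multiplication by a diagonal sequence of polynomial growth (degree $2r$), which remains continuous on the Schwartz space and its dual after shifting the seminorm index by $2r$; invoking Proposition~\ref{Proposition-SmoothFamilyForBarreledX} (the strong dual of the Montel space $\S$ is barreled) reduces smoothness of the family to the evident smoothness of each scalar curve $t \mapsto \exp((t-t_0)\mu_{\vec\alpha})$, and the uniform polynomial bounds assemble these into a continuous solution map. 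This gives integrability in each fixed degree.

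Finally, since $\widetilde{\nabla}$ is degree-preserving, the parallel transport is block-diagonal and the per-degree solutions assemble to an integrable connection on all of $C^{\per}(A)$. For the concluding clause, I would note that $G$ commutes with each $L_{\delta_m}$, because $[L_{\delta_m}, L\{\delta_j,\delta_k\}] = L\{[\delta_m,\delta_j],\delta_k\} + L\{\delta_j,[\delta_m,\delta_k]\} = 0$ by abelianness of $\g$; hence the diagonal parallel transport $\exp((t-t_0)G^*)$ preserves the subspace $C^{\per}_{\g}(A)$ of $\g$-invariant cochains, and restricting the integrable connection to this invariant submodule yields integrability of $\widetilde{\nabla}$ on $C^{\per}_{\g}(A)$ as well.
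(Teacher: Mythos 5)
Your proposal is correct and follows essentially the same route as the paper: reduce to each fixed degree $m$, observe that the perturbation $2\pi i\sum_{j>k}\theta_{jk}L\{\delta_j,\delta_k\}$ acts diagonally on the monomials $(u^{\alpha^0},\ldots,u^{\alpha^m})$ with purely imaginary eigenvalues, solve the resulting decoupled scalar ODEs by unimodular exponentials (whose $t$-derivatives only introduce polynomial-growth factors, so the solution stays a continuous cochain and depends continuously and smoothly on the data), and use $[L_X,\widetilde{\nabla}]=0$ to restrict to the $\g$-invariant subcomplex. The only cosmetic difference is that you phrase the computation via the transposed operator $G^*$ and the identification $\S(\Z^n)^{\potimes(m+1)}\cong\S(\Z^{n(m+1)})$, while the paper works directly with the coefficient functions $f_{\overline{\alpha}}=\phi(u^{\overline{\alpha}})$.
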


\begin{proof}
It suffices to prove $\widetilde{\nabla}$ is integrable on $C_{\per}(A_J)$.  The quotient $C_{\per}^\g(A_J)$ is obtained from $C_{\per}(A_J)$ as a quotient by a closed $\widetilde{\nabla}$-invariant direct summand, so integrability of $\widetilde{\nabla}$ on $C_{\per}^\g(A_J)$ will follow immediately.

Since $\widetilde{\nabla}$ restricts to a connection on $C_m(A_J)$ for each $m$, it suffices to prove $\widetilde{\nabla}$ is integrable on $C_m(A_J)$.  
Given $m+1$ multi-indices $\alpha^0, \ldots , \alpha^m \in \Z^n$, we shall use the notation
\[ u^{\overline{\alpha}} = u^{\alpha^0} \otimes u^{\alpha^1} \otimes \ldots \otimes u^{\alpha^m} \in C_m(\A_\Theta). \]  An element $\omega \in C_m(\A_\Theta)$ has the form
\[ \omega = \sum_{\overline{\alpha}} c_{\overline{\alpha}} u^{\overline{\alpha}}. \]
Since $\S(\Z^n)^{\potimes (m+1)} \cong \S(\Z^{n(m+1)})$, the coefficients $c_{\overline{\alpha}}$ must be of rapid decay.  Notice that $L\{\delta_j, \delta_k\}$ is diagonal with respect to the basis $\{u^{\overline{\alpha}}\}$, and
\[ \frac{1}{2\pi i}L\{\delta_j, \delta_k\} u^{\overline{\alpha}} = 2\pi i\cdot R(\overline{\alpha}) u^{\overline{\alpha}} \] for some real-valued polynomial $R(\overline{\alpha})$ in the multi-indices of $\overline{\alpha}$.  An element $\sigma = \sum_{\overline{\alpha}} f_{\overline{\alpha}}u^{\overline{\alpha}} \in C_m(A_J)$ with $f_{\overline{\alpha}} \in C^\infty(J)$ satisfies $\widetilde{\nabla}(\sigma) = 0$ if and only if
\[ f_{\overline{\alpha}}' + 2\pi i\cdot R(\overline{\alpha})f_{\overline{\alpha}} = 0 \]
for each $\overline{\alpha}$.  Given an initial condition $f_{\overline{\alpha}}(s) = c_{\overline{\alpha}}$, the unique solution is given by
\[ f_{\overline{\alpha}}(t) = c_{\overline{\alpha}}\exp(-2\pi i\cdot R(\overline{\alpha})(t-s)). \]  Notice that $|f_{\overline{\alpha}}(t)| = |c_{\overline{\alpha}}|$.  So for each $t \in J$, the coefficients $\{f_{\overline{\alpha}}(t)\}$ satisfy the same decay conditions as $\{c_{\overline{\alpha}}\}.$  Therefore if $\omega = \sum_{\overline{\alpha}} c_{\overline{\alpha}} u^{\overline{\alpha}} \in C_m(\A_{s\Theta})$, then $\rho(t) = \sum_{\overline{\alpha}}f_{\overline{\alpha}}(t)u^{\overline{\alpha}} \in C_m(\A_{t\Theta})$ for each $t \in J$.  Moreover, since $R(\overline{\alpha})$ is a polynomial function, it follows that $\rho$ is smooth as a function of $t$.  The solution $\rho$ is also smooth as we vary the initial parameter $s$, so $\widetilde{\nabla}$ is integrable by Theorem \ref{Theorem-IntegrabilityCriteria}.

\end{proof}


\begin{corollary}
For any $n \times n$ skew-symmetric matrix $\Theta$, there is a parallel translation isomorphism
\[ HP_{\bullet}(C^\infty(\T^n)) \cong HP_{\bullet}(\A_\Theta). \]
Consequently, 
\[ HP_0(\A_\Theta) \cong \C^{2^{n-1}}, \qquad HP_1(\A_\Theta) \cong \C^{2^{n-1}}. \]
\end{corollary}

%

\begin{proof}
Since $\widetilde{\nabla}$ commutes with $b+B$ on $C_{\per}^{\g}(A_J)$, its parallel translation operators are isomorphisms of chain complexes.  Thus we have
\[ \xymatrix{
HP_\bullet(C^\infty(\T^n)) \ar[r]^-\cong & HP_\bullet^{\g}(C^\infty(\T^n)) \ar[r]^-{P^{\widetilde{\nabla}}_{01}} & HP_\bullet^{\g}(\A_\Theta) \ar[r]^-\cong & HP_\bullet(\A_\Theta).
} \]

As shown in \cite{MR823176}, if $M$ is a compact smooth manifold, then
\[ HP_\bullet(C^\infty(M)) \cong \bigoplus_{k} H_{dR}^{\bullet + 2k}(M,\C), \]
where $H_{dR}^\bullet(M,\C)$ is the complex-valued de Rham cohomology of $M$.  Now, $H_{dR}^m(\T^n, \C)$ is a vector space of dimension $\binom{n}{m},$ and this gives the result.
\end{proof}

\begin{corollary}
For the algebra of sections $A_J$ of the noncommutative tori deformation, the Gauss-Manin connection is integrable on $HP_\bullet(A_J)$.
\end{corollary}

\begin{proof}
This is immediate from Theorem \ref{GMiffTildeNabla}, but we don't actually need the nilpotence condition to prove this.  Using \cite[Proposition 5.7]{Yashinski-SmoothDefs}, we have an isomorphism of $C^\infty(J)$-modules
\[ HP_\bullet(A_J) \cong HP_\bullet^{\g}(A_J) \cong C^\infty(J, HP_\bullet^{\g}(\A_0)) \cong C^\infty(J, HP_\bullet(C^\infty(\T^n)) \]
because $\widetilde{\nabla}$ is integrable on the complex $C_{\per}^\g(A_J)$.
Here we see that $\nabla_{GM}$ is a connection on a finite rank trivial bundle, so it must be integrable.
\end{proof}

Analogous results can be obtained for periodic cyclic cohomology by duality.  For example, we can consider
\[ C_{\g}^{\per}(\A_\Theta) = C_{\per}^{\g}(\A_\Theta)^* = \{ \phi \in C^{\per}(\A_\Theta) ~:~ \phi(L_{\delta_j} \omega) = 0, ~\forall \omega \in C_{\per}(\A_\Theta), ~\forall j \}. \]
This is the space of all cochains which are supported on chains whose $\Z^n$-grading is $(0,0,\ldots , 0)$.  The inclusion $C^{\per}_\g(\A_\Theta) \to C^{\per}(\A_\Theta)$ is the transpose of the quotient map from Theorem \ref{Theorem-InvariantChainEquivalence}, and also is a quasi-isomorphism.  Since the underlying space of each fiber of $C_{\per}^{\g}(A_J)$ is a nuclear Fr\'{e}chet space, it follows that the dual connection $\widetilde{\nabla}^\dual$ is automatically integrable on $C^{\per}_\g(A_J) = C_{\per}^\g(A_J)^\dual$ \cite{Yashinski-SmoothDefs}.  So we have parallel translation isomorphisms $HP^\bullet(C^\infty(\T^n)) \cong HP^\bullet(\A_\Theta)$ for periodic cyclic cohomology as well.

We have proved the rigidity of periodic cyclic homology/cohomology for the deformation of noncommutative tori.  It is interesting to note that the Hochschild homology/cohomology and (non periodic) cyclic homology/cohomology are very far from rigid in this deformation.  As an example, $HH^0(A) = HC^0(A)$ is the space of all traces on the algebra $A$.  Now in the simplest case where $n=2$ and \[ \Theta = \begin{pmatrix} 0 & -\theta \\ \theta & 0 \end{pmatrix},\] it is well-known that there is a unique (normalized) trace on $\A_\Theta$ when $\theta \notin \Q$ and an infinite dimensional space of traces when $\theta \in \Q$.  For example, every linear functional on the commutative algebra $\A_0 \cong C^\infty(\T^n)$ is a trace, and thus $HH^0(C^\infty(\T^n)) = C^\infty(\T^n)^*$ is the space of distributions on $\T^n$.  Moreover, Connes showed in \cite{MR823176} that in the case $\theta \notin \Q$, $HH^1(\A_\Theta)$ and $HH^2(\A_\Theta)$ are either finite dimensional, or infinite dimensional and non-Hausdorff depending on the diophantine properties of $\theta$.  Looking back, we conclude that there are no integrable connections on $C^{\bullet}(A)$ that commute with $b$, as such a connection would imply rigidity of Hochschild cohomology.

However, our connection $\widetilde{\nabla}$ does commute with $b$ on the invariant complex $C^\bullet_{\g}(A)$.  This shows that the invariant Hochschild cohomology $HH^\bullet_{\g}(\A_\Theta)$ is independent of $\Theta$.  For example, there is exactly one (normalized) $\g$-invariant trace on $C^\infty(\T^n)$, and that corresponds to integration with respect to the only (normalized) translation invariant measure.  Thus $HH^0_{\g}(C^\infty(\T^n)) = HH^0_{\g}(\A_\Theta) = \C$.  Consequently, the canonical map $HH^\bullet_{\g}(\A_\Theta) \to HH^\bullet(\A_\Theta)$ is not, in general, an isomorphism.

The argument presented here should work with other variants of cyclic cohomology.  One example is Connes' entire cyclic cohomology which is constructed by allowing for infinite cochains $(\phi_n) \in \prod_n C^n(A)$ satisfying a certain growth condition \cite{MR953915}.  The Lie derivative and contraction operators extend to the entire cochain complex.  One can introduce the connection $\widetilde{\nabla}$ on the $\g$-invariant entire cyclic cochain complex, and it is likely integrable, though we haven't checked the analytic details.

\section{Calculations with $\nabla^{GM}$ and the Chern-Connes pairing}

We have shown that $\nabla_{GM}$ is integrable for noncommutative tori in a rather indirect way by proving integrability of the auxiliary connection $\widetilde{\nabla}$.  However, it is still useful to understand $\nabla_{GM}$ here because it is canonical and has good properties with respect to the Chern-Connes pairing.  Here, we shall calculate with the dual connection $\nabla^{GM}$ to determine the parallel translation maps, as well as the deformation of the Chern-Connes pairing.

\subsection{Cyclic cocycles, characteristic maps, and cup products}

By a \emph{cyclic $k$-cocycle} we mean an element $\phi \in C^k(A)$ such that
\[ b\phi = 0, \qquad \phi(e, a_1, \ldots , a_k) = 0, \qquad \phi(a_k, a_0, \ldots , a_{k-1}) = (-1)^k \phi(a_0, a_1, \ldots , a_k). \]
A cyclic cocycle $\phi$ automatically satisfies $B\phi = 0$, and so $\phi$ gives a cohomology class in $HP^\bullet(A)$.  Recall that elements in the first slot of a cochain can be in the unitization $A_+$.  Below we will use the notation $\widetilde{a}_0$ for elements of the unitization $A_+$ and just $a_0$ for elements of $A$.

We return to the general setting of section \ref{Section-gInvariantComplex}.
Suppose that $\g$ is an abelian Lie algebra of derivations on an algebra $A$.  In addition, suppose $\tau$ is a trace on $A$ which is $\g$-invariant in the sense that
\[ \tau \circ X = 0, \qquad \forall X \in \g. \]  Notice $\tau$ is a cyclic $0$-cocycle, and the $\g$-invariance implies $[\tau] \in HP^0_{\g}(A)$.  Define the \emph{characteristic map} $\gamma: \Lambda^\bullet \g \to C^\bullet(A)$ by
\[ \gamma(X_1 \wedge \ldots \wedge X_k)(\widetilde{a}_0, \ldots , a_k) = \frac{1}{k!}\sum_{\sigma \in \mathbb{S}_k} (-1)^\sigma \tau(\widetilde{a}_0 X_{\sigma(1)}(a_1) X_{\sigma(2)}(a_2) \ldots X_{\sigma(k)}(a_k)). \]

\begin{proposition}
The functional $\gamma(X_1 \wedge \ldots \wedge X_k)$ is a $\g$-invariant cyclic $k$-cocycle.
\end{proposition}

\begin{remark}
The map $\gamma$ is a simple case of the Connes-Moscovici characteristic map in Hopf cyclic cohomology \cite{MR1657389}.  In their work, $\H$ is a Hopf algebra equipped with some extra structure called a modular pair, and $A$ is an algebra equipped with a Hopf action of $\H$.  Assuming $A$ possesses a trace that is compatible with the modular pair, they construct a map
\[ \gamma: HP^\bullet_{Hopf}(\H) \to HP^\bullet(A) \] from the Hopf periodic cyclic cohomology of $\H$ to the ordinary periodic cyclic cohomology of $A$.  In our situation, $\H = \U(\g)$, the universal enveloping algebra of $\g$.  The fact that $\g$ acts on $A$ by derivations implies that the action of $\U(\g)$ on $A$ is a Hopf action.  The compatibility condition for the trace follows from the fact that our trace is $\g$-invariant.  As was shown in \cite{MR1657389},
\[ HP_{Hopf}^\bullet(\U(\g)) \cong \bigoplus_{k = \bullet \text{ mod } 2} H_k^{Lie}(\g, \C), \]
where $H_k^{Lie}(\g, \C)$ is the Lie algebra homology of $\g$ with coefficients in the trivial $\g$-module $\C$.  As $\g$ is abelian, there is an isomorphism
\[ H_k^{Lie}(\g, \C) \cong \Lambda^k(\g). \]
The obtained characteristic map
\[ \gamma: \Lambda^\bullet(\g) \to HP^\bullet(A) \]
is the map defined above.
\end{remark}

The fact that $\gamma(X_1 \wedge \ldots \wedge X_k)$ is $\g$-invariant relies on the fact that $\g$ is abelian.  In this case, the characteristic map factors through the inclusion $HP^\bullet_\g(A) \to HP^\bullet(A),$ and we obtain a characteristic map
\[ \gamma: \Lambda^\bullet(\g) \to HP^\bullet_{\g}(A). \]

\begin{lemma}
Let $X_1, \ldots , X_n$ be derivations on an algebra $A$, and let $\tau$ be a trace on $A$.  There exists $\psi \in C^{n-1}(A)$ such that $B\psi = 0$ and
\begin{align*}
&\tau(\widetilde{a}_0X_1(a_1)\ldots X_n(a_n))\\
&\qquad = \frac{1}{n}\sum_{j=1}^n(-1)^{(j-1)(n+1)}\tau(\widetilde{a}_0 X_j(a_1) \ldots X_n(a_{n-j+1})X_1(a_{n-j+2})\ldots X_{j-1}(a_n))\\
&\qquad \qquad + (b\psi)(\widetilde{a}_0, \ldots , a_n).
\end{align*}
\end{lemma}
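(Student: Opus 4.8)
The plan is to exhibit $\psi$ explicitly and compute $b\psi$ directly, the whole point being to choose the \emph{shape} of $\psi$ so that $B\psi = 0$ holds for free. The mechanism I would rely on is this: if $\psi$ is built so that a derivation acts on \emph{every} one of its arguments, including the zeroth, then $B\psi = 0$ automatically. Indeed, each term of $B\psi$ has the form $\pm\,\psi(1, a_j, \ldots, a_{j-1})$, and placing the unit in the (now derivated) first slot annihilates the term since $X(1) = 0$ for any derivation $X$. This is exactly the reason a naive primitive such as $\psi_0(a_0, a_1) = -\tfrac14\tau\big(a_0\,(X_1X_2 + X_2X_1)(a_1)\big)$ fails: one checks in the case $n = 2$ that $b\psi_0$ is correct, but its first argument carries no derivation, so $B\psi_0(a_0) = -\tfrac14\tau\big((X_1X_2 + X_2X_1)(a_0)\big)$, which vanishes only when $\tau$ is $\g$-invariant. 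Since the lemma assumes no invariance of $\tau$, the primitive must be redistributed so that every slot is hit.

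Guided by this, I would define $\psi \in C^{n-1}(A)$ as a suitable signed combination of the cyclic rearrangements
\[ (a_0, \ldots, a_{n-1}) \longmapsto \tau\big(X_j(a_0)\,X_{j+1}(a_1)\cdots X_{j-1}(a_{n-1})\big), \qquad j = 1, \ldots, n, \]
where the indices of the $X$'s are read cyclically modulo $n$, so that the $n$ derivations are distributed one per argument. Each such functional is continuous and $R$-multilinear, and vanishes whenever $1$ occupies a slot other than the first because that slot is derivated; hence $\psi \in C^{n-1}(A)$, and by the observation of the previous paragraph $B\psi = 0$ is immediate. In the model case $n = 2$ the correct choice is
\[ \psi(a_0, a_1) = \tfrac14\big(\tau(X_1(a_0)X_2(a_1)) + \tau(X_2(a_0)X_1(a_1))\big), \]
and a direct expansion of $b\psi(a_0, a_1, a_2)$ confirms it reproduces $\tfrac12\big(\tau(a_0 X_1(a_1)X_2(a_2)) + \tau(a_0 X_2(a_1)X_1(a_2))\big)$, which is the $n = 2$ instance of the claim. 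The signs and the overall normalizing coefficient in the general $\psi$ are not read off from the statement; they are forced by the computation below.

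It then remains to verify the coboundary formula, which is the main computational obstacle. I would apply $b$ to $\psi$, expand each merged factor $X_\bullet(a_k a_{k+1})$ by the Leibniz rule, and use the trace property of $\tau$ to cyclically reorder the factors inside $\tau$. The structurally important move is that in the wrap-around summand the piece in which the split derivation comes \emph{off} $a_0$ produces, after cycling $a_0$ to the front, a genuine $\tau(a_0 X_\bullet(a_1)\cdots X_\bullet(a_n))$ of the type appearing on the right-hand side; the remaining pieces, in which $a_0$ stays derivated, cancel in pairs against the interior merge terms by the trace property (exactly as the eight cross terms collapse in the $n = 2$ check). Summing over the cyclic rearrangements making up $\psi$ then collects these surviving $\tau(a_0 \cdots)$ terms into $\phi - \tfrac1n\sum_{j=1}^n (-1)^{(j-1)(n+1)}\phi^{(j)}$, where $\phi(a_0, \ldots, a_n) = \tau(a_0 X_1(a_1)\cdots X_n(a_n))$ and $\phi^{(j)}$ is its $j$-th cyclically rotated functional. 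The delicate part, and where I expect the real work to lie, is the bookkeeping of the Koszul signs $(-1)^{(j-1)(n+1)}$ and of the factor $1/n$: one must check that the Leibniz expansion together with the trace-cyclic reorderings assembles these weights correctly for all $n$, the $B$-closedness having already been secured by the form of $\psi$.
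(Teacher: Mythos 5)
Your strategy is the same as the paper's: build $\psi$ out of the fully derivated cyclic rearrangements $\phi_j(a_0,\ldots,a_{n-1}) = \tau(X_j(a_0)X_{j+1}(a_1)\cdots X_{j-1}(a_{n-1}))$ (indices read mod $n$), so that $B\psi = 0$ is automatic because every slot, including the zeroth, carries a derivation. That observation, and your $n=2$ verification, are correct. But the proof is incomplete for general $n$: you never exhibit the candidate $\psi$, and you explicitly defer both the determination of the coefficients and the verification of the coboundary identity to ``bookkeeping'' that is not carried out. Since the lemma asserts the existence of such a $\psi$ for every $n$, this deferred step is the actual content of the statement and cannot be left as an expectation.

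The missing step is less delicate than you anticipate, and the paper isolates the one identity that makes it mechanical: for any derivations $Y_1,\ldots,Y_n$ and the cochain $\phi(a_0,\ldots,a_{n-1}) = \tau(Y_1(a_0)Y_2(a_1)\cdots Y_n(a_{n-1}))$ one has
\[ (b\phi)(a_0,\ldots,a_n) = \tau\bigl(a_0Y_1(a_1)\cdots Y_n(a_n)\bigr) + (-1)^n\,\tau\bigl(a_0Y_2(a_1)\cdots Y_n(a_{n-1})Y_1(a_n)\bigr), \]
with no invariance hypothesis on $\tau$: the interior Leibniz terms cancel in adjacent pairs, and the two surviving boundary pieces become the displayed terms after cycling inside the trace. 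Writing $T_j(a_0,\ldots,a_n) = \tau(a_0X_j(a_1)\cdots X_{j-1}(a_n))$, this says $b\phi_j = T_j + (-1)^nT_{j+1}$, so finding $\psi = \sum_j c_j\phi_j$ with the required $b\psi$ reduces to solving the two-term recursion $c_j + (-1)^nc_{j-1} = \delta_{j1} - \tfrac{1}{n}(-1)^{(j-1)(n+1)}$. The paper's solution is
\[ \psi = \frac{1}{n}\sum_{j=1}^{n-1}(-1)^{(j-1)(n+1)}(n-j)\,\phi_j, \]
and a one-line telescoping check confirms it. (Your symmetrized choice at $n=2$, namely $\tfrac14(\phi_1+\phi_2)$, also works; it differs from the paper's $\tfrac12\phi_1$ by a multiple of $\phi_1-\phi_2$, which is $b$-closed when $n=2$, so the solution is not unique.) With the displayed identity and the explicit $\psi$ in hand, your argument closes.
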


\begin{proof}
Given any $n$ derivations $Y_1, \ldots , Y_n$, the cochain $\phi \in C^{n-1}(A)$ given by
\[ \phi(a_0, \ldots , a_{n-1}) = \tau(Y_1(a_0)Y_2(a_1)\ldots Y_n(a_{n-1})), \qquad \phi(e, a_1, \ldots , a_{n-1}) = 0,\] satisfies
\[ (b\phi)(\widetilde{a}_0, \ldots , a_n) = \tau(\widetilde{a}_0Y_1(a_1) \ldots Y_n(a_n) + (-1)^n \widetilde{a}_0Y_2(a_1)\ldots Y_n(a_{n-1})Y_1(a_n)) \] and $B\phi = 0$.  It follows that
\begin{align*}
&\psi(a_0, \ldots , a_{n-1})\\
&\qquad = \frac{1}{n}\sum_{j=1}^{n-1} (-1)^{(j-1)(n+1)}(n-j) \tau(X_j(a_0) X_{j+1}(a_1)\ldots X_{j-1}(a_n)),\\
&\psi(e, a_1, \ldots , a_{n-1}) = 0
\end{align*}
satisfies the conclusions of the lemma.
\end{proof}

Recall that for any $Z \in \g$, the contraction $I_Z$ is a chain map on the invariant complex $C_{\per}^\g(A)$.  We shall use the same notation $I_Z$ to denote its transpose, which is a chain map on $C^{\per}_\g(A)$.  We now compute this operator on the image of the characteristic map.

\begin{proposition}
For any $Z \in \g$ and $\omega \in \Lambda^\bullet \g$,
\[ I_Z[\gamma(\omega)] = [\gamma(Z \wedge \omega)] \] in $HP^\bullet_\g(A)$.
\end{proposition}

\begin{proof}
Let $\phi = \gamma(X_1 \wedge \ldots \wedge X_k)$.  Since $\phi(e, a_1, \ldots , a_k) = 0$, we immediately have $S_Z \phi = 0$.  Thus, $I_Z \phi = \iota_Z \phi$, and
\begin{align*}
(\iota_Z \phi)(a_0, \ldots , a_{k+1}) &= \phi(a_0Z(a_1), a_2, \ldots a_{k+1})\\
&= \frac{1}{k!}\sum_{\sigma \in \mathbb{S}_k} (-1)^\sigma \tau(a_0 Z(a_1)X_{\sigma(1)}(a_2) X_{\sigma(2)}(a_3) \ldots X_{\sigma(k)}(a_{k+1}))\\
&= \gamma(Z \wedge X_1 \wedge \ldots \wedge X_k)(a_0, \ldots , a_{k+1}) + (b\psi)(a_0, \ldots , a_{k+1})
\end{align*} for some $\psi$ with $B\psi = 0$ by applying the previous lemma to each term in the sum.  Hence, $I_Z \gamma(X_1 \wedge \ldots \wedge X_k) = \gamma(Z \wedge X_1 \wedge \ldots \wedge X_k) + (b+B)\psi$.
\end{proof}

As in the homology case (Theorem~\ref{Theorem-CupProduct},) there is an algebra map
\[ \chi: \Lambda^\bullet(\g) \to \End(HP^\bullet_{\g}(A)) \] given by
\[ \chi(X_1 \wedge \ldots \wedge X_k) = I_{X_1} I_{X_2} \ldots I_{X_k}. \]

\begin{corollary} \label{Corollary-CupProductGeneralizesCharacteristicMap}
For any $\omega \in \Lambda^\bullet \g$,
\[ [\gamma(\omega)] = \chi(\omega)[\tau] \] in $HP^\bullet_{\g}(A)$.
\end{corollary}

\begin{remark}
A generalization of the Connes-Moscovici characteristic map was constructed in \cite{MR2112033}.  A special case of this construction is a cup product
\[ \smile: HP^p_{Hopf}(\H) \otimes  HP^q_{\H}(A) \to HP^{p+q}(A), \] where $HP^\bullet_{\H}(A)$ is the periodic cyclic cohomology of $A$ built out of cochains which are invariant in some sense with respect to an action of $\H$.  In the Connes-Moscovici picture, the properties of the trace $\tau$ ensures that it gives a cohomology class in $HP^\bullet_{\H}(A)$, and
\[ [\omega] \smile [\tau] = \gamma[\omega] \] for all $[\omega] \in HP^\bullet_{Hopf}(\H)$.
In our situation where $\H = \U(\g)$, we have that $HP^\bullet_{\H}(A) = HP^\bullet_{\g}(A)$ and the cup product is a map
\[ \smile: \Lambda^p \g \otimes HP^q_{\g}(A) \to HP^{p+q}(A). \]
Our map $\chi: \Lambda^\bullet(A) \to \End(HP^\bullet_\g(A))$ followed by the canonical inclusion $HP^\bullet_{\g}(A) \to HP^\bullet(A)$ coincides with this cup product.  The fact that the cup product sends $\g$-invariant cocycles to $\g$-invariant cocycles is a consequence of the fact that $\g$ is abelian.
\end{remark}

\subsection{$\nabla^{GM}$-derivatives of characteristic cocycles}
Now consider the situation of section \ref{Subsection-ConnectionsOngInvariantComplex}.  Suppose $A_J$ is the algebra of sections of a deformation, $\g$ is an abelian Lie algebra of derivations on $A_J$, $\nabla$ is a $\g$-invariant connection on $A_J$ satisfying
\[ \nabla(a_1 a_2) = \nabla(a_1)a_2 + a_1\nabla(a_2) + \sum_{i=1}^r X_i(a_1)Y_i(a_2),\qquad a_1,a_2 \in A_J \]
for $X_i, Y_i \in \g$.  In addition we assume $A_J$ has a $\g$-invariant parallel trace $\tau$.  That is, there is a trace $\tau: A_J \to C^\infty(J)$ such that $\tau \circ Z = 0$ for all $Z \in \g$ and $\tau \circ \nabla = \frac{d}{dt} \circ \tau$.  Then there is a characteristic map
\[ \gamma: \Lambda^\bullet \g \to HP^\bullet_{\g}(A_J) \] as in the previous section, where the cohomology is considered over $C^\infty(J)$.  The dual of the connection
\[ \widetilde{\nabla} = L_\nabla + \sum_{i=1}^r L\{X_i,Y_i\} \] is given by
\[ \widetilde{\nabla}^\dual \phi = \frac{d}{dt} \circ \phi - \phi \circ L_\nabla - \sum_{i=1}^r \phi \circ L\{X_i,Y_i\} \] on $C^{\per}_{\g}(A_J)$.

\begin{proposition} \label{Proposition-CanonicalCyclicCocyclesAreParallel}
For any $\omega \in \Lambda^\bullet \g$, we have $\widetilde{\nabla}^\dual \gamma(\omega) = 0.$
\end{proposition}

\begin{proof}
Letting $\phi = \gamma(Z_1 \wedge \ldots \wedge Z_m)$, one can show that
\[ \frac{d}{dt} \phi = \phi \circ L_\nabla + \sum_{i=1}^r \phi \circ L\{X_i,Y_i\} \] by using $\frac{d}{dt} \circ \tau = \tau \circ \nabla$ and the identity
\begin{align*}
\nabla(a_0 \ldots a_m) &= \sum_{j=0}^k a_0\ldots \nabla(a_j) \ldots a_m\\
&\qquad + \sum_{i=1}^r \sum_{j < k} a_0\ldots X_i(a_j) \ldots Y_i(a_k)\ldots a_m
\end{align*} The rest follows from the fact that $\g$ is abelian and $\nabla$ is $\g$-invariant.
\end{proof}

As before, let $\Omega = \sum_{i=1}^r X_i \wedge Y_i \in \Lambda^2 \g$.  Recall from Proposition \ref{Proposition-NablaGMIsNilpotentPerturbation} that
\[ \nabla_{GM} = \widetilde{\nabla} + \chi(\Omega) \]
as operators on $HP_\bullet^\g(A_J)$.  Dualizing gives
\[ \nabla^{GM}[\phi] = \widetilde{\nabla}^\dual [\phi] - \sum_{i=1}^r [\phi \circ \chi(X_i \wedge Y_i)] \]
and further
\[ [\phi \circ \chi(X_i \wedge Y_i)] = [\phi \circ (I_{X_i} I_{Y_i})] = I_{Y_i} I_{X_i} [\phi] = - \chi (X_i \wedge Y_i) [\phi]. \]  Consequently,
\[ \nabla^{GM} = \widetilde{\nabla}^\dual + \chi(\Omega)\]
as operators on $HP^\bullet_\g(A_J)$.  Combining the previous results, we immediately obtain the following.

\begin{theorem} \label{Theorem-NablaGMDerivative}
In the above situation, for any $\omega \in \Lambda^\bullet \g$,
\[ \nabla^{GM}[\gamma(\omega)] = [ \gamma(\Omega \wedge \omega)] \] in $HP^\bullet_\g(A_J)$.
\end{theorem}

It is worth mentioning that this result is independent of the integrability of $\widetilde{\nabla}^\dual$ or $\nabla^{GM}$.  Using this theorem, we can explicitly describe $\nabla^{GM}$-parallel classes through a given characteristic cocycle.

\begin{corollary} \label{Corollary-NablaGMParallelSection}
Let $\omega \in \Lambda^\bullet \g$ and view $\gamma(\omega) \in HP^\bullet_\g(A_s)$.  Then the cocycle $\phi \in HP^\bullet_{\g}(A_J)$ given by
\[ \phi = \sum_{p=0}^{\lfloor \dim \g /2 \rfloor} (-1)^p \frac{(t-s)^p}{p!} \gamma(\Omega^{\wedge p} \wedge \omega) \]
is a $\nabla^{GM}$-parallel section through $\gamma(\omega) \in HP^\bullet_\g(A_s)$.
\end{corollary}

Also from Theorem \ref{Theorem-NablaGMDerivative}, we obtain the following result about the deformation of the Chern-Connes character.

\begin{corollary} \label{Corollary-DerivativeOfChernConnesPairing}
Let $\omega \in \Lambda^\bullet \g$.
\begin{enumerate}[(i)]
\item For any idempotent $P \in M_N(A_J)$,
\[ \frac{d}{dt} \langle [\gamma(\omega)], [P] \rangle = \langle  [\gamma(\Omega \wedge \omega)], [P] \rangle. \]
\item For any invertible $U \in M_N(A_J)$,
\[ \frac{d}{dt} \langle [\gamma(\omega)], [U] \rangle = \langle  [\gamma(\Omega \wedge \omega)], [U] \rangle. \]
\end{enumerate}
\end{corollary}

We can be a little more explicit with these formulas using the form of the cycles $\ch P$ and $\ch U$.  First notice that $M_N(A_J) \cong M_N(\C) \otimes A_J$ is the algebra of sections of a deformation that has the same properties as $A_J$.  Namely we have an action of an abelian Lie algebra $\g$ generated by the derivations $\id \otimes X_i$ and $\id \otimes Y_i$.  There is a $\g$-invariant connection $\id \otimes \nabla$ as well as a parallel $\g$-invariant trace $\tr \otimes \tau$.  Here $\tr: M_N(\C)$ is the usual matrix trace.  Notationally, we will refer to the above objects as simply $X_i, Y_i, \nabla$, and $\tau$ respectively.  The connection $\nabla$ satisfies
\[ \nabla(a_1a_2) = \nabla(a_1)a_2 + a_1\nabla(a_2) + \sum_{i=1}^r X_i(a_1)Y_i(a_2),\qquad a_1, a_2 \in M_N(A_J), \]
so all the previous results apply to this deformation.  Notice we have a commutative diagram
\[ \xymatrix{
\Lambda^\bullet \g \ar[r]^-{\gamma} \ar[rd]^-{\gamma} & C^{\per}(A_J) \ar[d]^-{T^\dual}\\
& C^{\per}(M_N(A_J))
} \]
where $T^\dual$ is the transpose of the generalized trace $T: C_{\per}(M_N(A_J)) \to C_{\per}(A_J)$.  Given a projection $M_N(A_J)$ and a cocycle $\phi \in C^{\per}_{C^\infty(J)}(A_J)$, we will view the Chern-Connes pairing as a pairing between a cocycle and a cycle on $M_N(A_J)$:
\[ \langle [\phi], [P] \rangle = \langle T^\dual \phi, \ch P \rangle. \]
So if $\omega \in \Lambda^{2m} \g$, then by using the explicit form of $\ch P \in C_{\even}^{C^\infty(J)}(M_N(A_J))$,
\[ \langle [\gamma(\omega)], [P] \rangle = (-1)^m\frac{(2m)!}{m!}\gamma(\omega)(P, P, \ldots , P), \] where $\gamma(\omega)$ is viewed as a cocycle in $C^{\even}_{C^\infty(J)}(M_N(A_J)),$ and there are $2m+1$ appearances of $P$ on the right hand side.  Similarly, if $U \in M_N(A_J)$ is an invertible and $\omega \in \Lambda^{2m+1}\g$, then
\[ \langle [\gamma(\omega)], [U] \rangle = (-1)^m m! \gamma(\omega)(U^{-1}, U, \ldots , U^{-1}, U). \]  Combining these with Corollary \ref{Corollary-DerivativeOfChernConnesPairing}, we obtain the following result.

\begin{theorem} \label{Theorem-ExplicitDerivativeOfChernConnesPairing}
In the above situation,
\begin{enumerate}[(i)]
\item If $\omega \in \Lambda^{2m}\g$, and $P \in M_N(A_J)$ is an idempotent,
\[ \frac{d}{dt} \gamma(\omega)(P, P, \ldots , P) = -(4m+2) \gamma(\Omega \wedge \omega)(P, P, \ldots , P). \]
\item If $\omega \in \Lambda^{2m+1}\g$, and $U \in M_N(A_J)$ is invertible,
\[ \frac{d}{dt} \gamma(\omega)(U^{-1}, U, \ldots , U^{-1}, U) = -(m+1) \gamma(\Omega \wedge \omega)(U^{-1}, U, \ldots , U^{-1}, U). \]
\end{enumerate}
\end{theorem}

\subsection{Noncommutative tori}

Here, we shall apply our results to the noncommutative tori deformation.

\begin{theorem}
For every $\Theta$, the map $\gamma: \Lambda^\bullet \g \to HP^\bullet_\g(A_\Theta)$ is an isomorphism of $\Z/2$-graded spaces.
\end{theorem}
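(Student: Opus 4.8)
The plan is to transport the statement along the deformation down to the commutative torus $\Theta = 0$, where $\gamma$ can be matched directly against the classical generators of periodic cyclic cohomology.

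For a fixed $\Theta$, the deformation $\{M_N(\mathcal{A}_{t\Theta})\}_{t \in \R}$ joins $M_N(C^\infty(\T^n))$ at $t = 0$ to $M_N(\atheta)$ at $t = 1$. By Proposition~\ref{Proposition-CanonicalCyclicCocyclesAreParallel}, each $\gamma(X_1 \wedge \ldots \wedge X_m)$, regarded as an element of $C^{\per}_{\g}(M_N(A))$ over $C^\infty(\R)$, is a $\widetilde{\nabla}$-parallel section; since the trace $\tr \otimes \tau$ evaluates fiberwise and the $\delta_j$ are independent of $t$, its value at a parameter $t$ is precisely the characteristic cocycle $\gamma_{t\Theta}(X_1 \wedge \ldots \wedge X_m)$ of the fiber algebra. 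By Theorem~\ref{Theorem-TildeNablaIsIntegrable}, $\widetilde{\nabla}$ is integrable, and by Proposition~\ref{Proposition-BundleOfComplexesParallelTransport} its parallel transport $P^{\widetilde{\nabla}}$ descends to an isomorphism $HP^\bullet_{\g}(M_N(\mathcal{A}_{t\Theta})) \to HP^\bullet_{\g}(M_N(\mathcal{A}_{s\Theta}))$. Because $P^{\widetilde{\nabla}}$ carries the parallel section $\gamma_{t\Theta}(\cdots)$ to $\gamma_{s\Theta}(\cdots)$, it intertwines the two characteristic maps, so $\gamma$ is an isomorphism for $\atheta$ if and only if it is for $\azero$. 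Composing with the $\g$-invariant chain equivalence of Theorem~\ref{Theorem-InvariantChainEquivalence}, which identifies the invariant cocycle $\gamma(\cdots)$ with its image in $HP^\bullet$, it suffices to prove the theorem when $\Theta = 0$.

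For the base case I would first use the generalized trace to reduce from $M_N(C^\infty(\T^n))$ to $C^\infty(\T^n)$, under which $\gamma$ is compatible with the Morita isomorphism $HP^\bullet(M_N(\azero)) \cong HP^\bullet(\azero)$. On $C^\infty(\T^n)$ the cocycle $\gamma(\delta_{i_1} \wedge \ldots \wedge \delta_{i_k})$, for $i_1 < \ldots < i_k$, is a nonzero multiple of the de Rham current $(a_0, \ldots, a_k) \mapsto \int_{\T^n} a_0 \, da_{i_1} \wedge \ldots \wedge da_{i_k}$, i.e.\ the fundamental class of the corresponding coordinate $k$-subtorus. Under Connes' identification $HP^\bullet(C^\infty(\T^n)) \cong \bigoplus_k H^{dR}_{\bullet + 2k}(\T^n, \C)$ from \cite{MR823176}, these $\binom{n}{k}$ currents form a basis of the degree-$k$ part, and since $\{\delta_{i_1} \wedge \ldots \wedge \delta_{i_k}\}_{i_1 < \ldots < i_k}$ is a basis of $\Lambda^k \g$, the map $\gamma$ is an isomorphism for $\Theta = 0$, hence for all $\Theta$. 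A slicker variant avoids the explicit de Rham dictionary: the source and target have the same finite dimension in each parity, namely $\dim_\C \Lambda^{\even}\g = \dim_\C \Lambda^{\odd}\g = 2^{n-1}$ on one side and $HP^0(\atheta) \cong HP^1(\atheta) \cong \C^{2^{n-1}}$ on the other, so it is enough to check injectivity, which one obtains by pairing the $\gamma(\delta_{i_1} \wedge \ldots \wedge \delta_{i_k})$ against the Chern characters of the monomial unitaries $u^\alpha$.

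The conceptually substantive step is the reduction by parallel transport, which follows readily from the machinery already assembled; the remaining work is bookkeeping, namely verifying that $\gamma$ is genuinely compatible with each identification used, the fiberwise evaluation of the parallel section, the generalized-trace reduction to $N = 1$, and the $\g$-invariant chain equivalence. The one genuinely external ingredient is the commutative base case, and the main obstacle there is fixing normalizations so that $\gamma(\delta_{i_1} \wedge \ldots \wedge \delta_{i_k})$ is seen to be a nonzero multiple of the fundamental current of the coordinate subtorus. Here Corollary~\ref{Corollary-CupProductGeneralizesCharacteristicMap}, which writes $\gamma(X_1 \wedge \ldots \wedge X_k) = \chi(X_1 \wedge \ldots \wedge X_k)[\tau]$, is helpful, since it reduces the entire computation to the single class $[\tau]$ together with the action of the exterior algebra through the algebra map $\chi$.
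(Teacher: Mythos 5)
Your proposal is correct and follows essentially the same route as the paper: reduce to $N=1$ via the generalized trace, use the $\widetilde{\nabla}$-parallelism of the $\gamma(X_1 \wedge \ldots \wedge X_m)$ (Proposition~\ref{Proposition-CanonicalCyclicCocyclesAreParallel}) together with integrability (Theorem~\ref{Theorem-TildeNablaIsIntegrable}) to transport the statement to $\Theta = 0$, and then identify $\gamma(\delta_{i_1} \wedge \ldots \wedge \delta_{i_k})$ with the fundamental currents of the coordinate subtori, which form a basis under Connes' identification of $HP^\bullet(C^\infty(\T^n))$ with de Rham homology. The paper's proof is exactly this argument, stated more tersely.
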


\begin{proof}
By Theorem~\ref{Theorem-TildeNablaIsIntegrable} and Proposition~\ref{Proposition-CanonicalCyclicCocyclesAreParallel}, is suffices to prove this for $\A_0 \cong C^\infty(\T^n)$. Let $s_1, \ldots , s_n$ be the coordinates in $\T^n = \R^n/\Z^n$.  Choosing a subset of coordinates $s_{i_1}, \ldots , s_{i_m}$ determines a subtorus $T$ of dimension $m$.  All such subtori are in bijection with a generating set of homology classes of $\T^n$.  The $\g$-invariant de Rham cycle corresponding to $T$ is given by taking the average of the integral of a differential form over all subtori parallel to $T$.  The cochain in $C^m(C^\infty(\T^n))$ corresponding to this cycle is
\[ \phi_T(f_0, \ldots , f_m) = \int_{\T^n/T} \left( \int_{gT} f_0 df_1 \wedge \ldots \wedge df_m\right) dgT, \] and one can show
$\phi_T = \gamma(\delta_{i_1} \wedge \ldots \wedge \delta_{i_m})$ up to a scalar multiple.
\end{proof}

Hence the characteristic cocycles form a basis for $HP^\bullet(\A_\Theta)$.  So we can use Corollary \ref{Corollary-NablaGMParallelSection} to explicitly describe the parallel translation of $\nabla^{GM}$.  Let's do this for the $n=2$ case.  Here, the noncommutative torus is determined by a single real parameter $\theta := \theta_{21}$, and we shall denote the algebra by $\mathcal{A}_\theta$.  In the case $\theta \notin \Q$, $\mathcal{A}_\theta$ is also known as (the smooth version of) the irrational rotation algebra.  We shall consider $\{\mathcal{A}_\theta\}_{\theta \in J}$ as a smooth one-parameter deformation, where $J \subset \R$ is an open interval containing $0$.  Here, $\g = \Span\{\delta_1, \delta_2\}$ and $\Omega = \frac{1}{2\pi i} \delta_2 \wedge \delta_1$.  Let $\tau_2$ be the cyclic $2$-cocycle $\tau_2 = \frac{1}{\pi i}\gamma(\delta_1 \wedge \delta_2)$, which is given explicitly by
\[ \tau_2(a_0, a_1, a_2) = \frac{1}{2\pi i}\tau(a_0 \delta_1(a_1) \delta_2(a_2) - a_0 \delta_2(a_1) \delta_1(a_2)). \]
Then from Theorem \ref{Theorem-NablaGMDerivative},
\[ \nabla^{GM}[\tau_2] = \frac{1}{\pi i (2\pi i)}[\gamma(\delta_2 \wedge \delta_1 \wedge \delta_1 \wedge \delta_2] = 0, \]
so that $[\tau_2] \in HP^\bullet(A_J)$ is a $\nabla^{GM}$-parallel section.  Let's consider the trace $[\tau] = [\gamma(1)]$ over $\A_0$.  From Corollary \ref{Corollary-NablaGMParallelSection},
\[ [\gamma(1)] - \frac{\theta}{2\pi i}[\gamma(\delta_2 \wedge \delta_1)] = [\tau] + \frac{\theta}{2} [\tau_2] \]
is a $\nabla^{GM}$-parallel section through $[\tau]$ at the fiber $\theta = 0$.

Now consider the odd classes
\[ \tau_1^1 = \frac{1}{2\pi i}\gamma(\delta_1),\qquad \tau_1^2 = \frac{1}{2\pi i}\gamma(\delta_2). \]
Then we have
\[ \nabla^{GM}[\tau_1^j] = \frac{1}{(2\pi i)^2}[\gamma(\delta_2 \wedge \delta_1 \wedge \delta_j)] = 0 \]
for $j=1,2$.  So we can completely describe the parallel translation of $\nabla^{GM}$.

\begin{theorem}
Let $\theta \in \R$, then $P^{\nabla^{GM}}_{0, \theta}: HP^\bullet(\A_0) \to HP^\bullet(\A_\theta)$ is given by
\[ P^{\nabla^{GM}}_{0, \theta}: [\tau] \mapsto [\tau] + \frac{\theta}{2}[\tau_2], \qquad [\tau_2] \mapsto [\tau_2],\qquad [\tau_1^j] \mapsto [\tau_1^j].\]
\end{theorem}

It is interesting to notice that this parallel translation gives a nontrivial automorphism
\[ P^{\nabla^{GM}}_{0, 1}: HP^\bullet(C^\infty(\T^n)) \to HP^\bullet(\A_1) =  HP^\bullet(C^\infty(\T^n)). \]

Now let's consider the Chern-Connes pairing.  Using Theorem \ref{Theorem-ExplicitDerivativeOfChernConnesPairing}, we see that for any idempotent $P \in M_N(A_J)$,
\[ \frac{d}{d\theta} \tau(P) = \tau_2(P,P,P) \] and
\[ \frac{d^2}{d\theta^2} \tau(P) = \frac{d}{d\theta} \tau_2(P,P,P) = 0. \] Thus 
\[ \tau(P(\theta)) = \tau(P(0)) + \tau_2(P(0),P(0),P(0)) \cdot \theta. \]
Now the idempotent $P(0) \in M_N(\mathcal{A}_0) \cong M_N(C^\infty(\T^2))$ corresponds to a smooth vector bundle over $\T^2$ and the value $\tau(P(0))$ is the dimension of this bundle.  The number $\tau_2(P(0),P(0),P(0))$ is the first Chern number of the bundle, which is an integer.  So $P$ satisfies
\[ \tau(P) = C + D\theta \] for integers $C$ and $D$.

Starting with a vector bundle represented by $P(0) \in M_N(C^\infty(\T^2))$, one can always extend it to a smooth family of idempotents $P(\theta) \in M_N(C^\infty(\A_\theta))$ for small enough $\theta$ using a functional calculus argument.  Our results imply that the trace $\tau(P(\theta))$ is determined by, and can be computed from, the characteristic classes of the vector bundle $P(0)$.  The same is true for other Chern-Connes pairings.  For example, the value $\tau_2(P(\theta), P(\theta), P(\theta))$ is constant, hence an integer.  This integrality was also explained with an index formula in \cite{MR572645}.  Similarly, the pairing of an invertible with $\tau_1^j$ is integral.

These results also show that $A_\theta$ has a $K$-theory class with trace $\theta$, at least for small enough $\theta$, and so suggest that one may be able to find an idempotent in $\A_\theta$ with trace $\theta$.  Of course it is well-known now that such idempotents exist \cite{MR623572}.  Given such an idempotent $P_\theta \in \A_\theta$, one could try to extend it to $\theta = 0$ through an idempotent $P \in A_J$.  However this is impossible because $P(0)$ would necessarily satisfy $\tau_2(P(0), P(0), P(0)) \neq 0$, and the only idempotents in $\A_0$, namely $0$ and $1$, do not.  However, one can extend $P_\theta$ to an idempotent in $A_J$ provided $J$ doesn't contain integers.  One can also find an idempotent $P \in M_2(A_J)$ of trace $1 + \theta$, but only if $J \subseteq (-1, 1)$.

Two other interesting situations are when $J = \R$ or $J = \T$.  Here, the trace of any idempotent $P \in M_N(A_J)$ must be constant and integral, and $\tau_2(P,P,P) = 0$.  If $\tau_2(P,P,P) \neq 0$, then $\tau(P)$ would be negative somewhere in the $J = \R$ case, and $\tau(P)$ wouldn't be continuous in the $J = \T$ case.

Analogous results can be worked out for higher dimensional noncommutative tori.  For the deformation $\{\A_{t\Theta}\}_{t \in J}$, the element $\Omega \in \Lambda^2 \g$ is
\[ \Omega = \frac{1}{2\pi i} \sum_{j > k} \theta_{jk} \cdot \delta_j \wedge \delta_k. \]
One can explicitly compute $\nabla^{GM}$ and its parallel translation operators using Theorem \ref{Theorem-NablaGMDerivative}.  The deformation of the Chern-Connes pairing can be determined from Theorem \ref{Theorem-ExplicitDerivativeOfChernConnesPairing}.  As functions of $t$, the pairings can be higher degree polynomials, depending on the size of $n$, see also \cite{MR731772} in the case of the canonical trace.  The most interesting case would be to choose $\Theta$ so that $\Omega$ is nondegenerate in the sense that $\Omega^{\wedge \lfloor n/2 \rfloor} \neq 0$.

\begin{bibdiv}

\begin{biblist}

\bibselect{NCTori}
\end{biblist}

\end{bibdiv}

\end{document}